\documentclass[10pt,reqno]{amsart}

\usepackage{amsmath, amssymb, cases, color}
\usepackage{hyperref}


\usepackage{stmaryrd}
\usepackage{amsmath, amssymb,amscd, }
\usepackage{amsfonts}
\usepackage{graphicx}

 \usepackage{upref}
\hypersetup{linkcolor=blue, colorlinks=true,citecolor = red}
%

\newtheorem{thm}{Theorem}[section]
\newtheorem{prop}[thm]{Proposition}

\newtheorem{cor}[thm]{Corollary}
\newtheorem{rmk}[thm]{Remark}

\theoremstyle{definition}
\newtheorem{definition}[thm]{Definition}

\theoremstyle{remark}
\numberwithin{equation}{section}

\newcommand{\Ran}{\operatorname{Ran}}


\newcommand{\BE}{\begin{equation}}
\newcommand{\EEQ}{\end{equation}}
\newcommand{\rfb}[1]{\mbox{\rm
		(\ref{#1})}\ifx\undefined\stillediting\else:\fbox{$#1$}\fi}

\newfont{\roma}{cmr10 scaled 1200}
\renewcommand{\cline}{{\mathbb C}}

\newcommand{\nline}  {{\mathbb N}}
\newcommand{\rline}  {{\mathbb R}}

\newcommand{\tline}  {{\mathbb T}}

\newcommand{\zline}  {{\mathbb Z}}

\newcommand{\dd}  {{\rm d}\hbox{\hskip 0.5pt}}
\renewcommand{\leq} {\leqslant}
\renewcommand{\geq} {\geqslant}
\newcommand{\mm}    {{\hbox{\hskip 0.5pt}}}
\newcommand{\m}     {{\hbox{\hskip 1pt}}}

\newcommand{\bluff} {{\hbox{\raise 15pt \hbox{\mm}}}}
\newcommand{\sbluff}{{\hbox{\raise 10pt \hbox{\mm}}}}
\newcommand{\Om}    {{\Omega}}

\newcommand{\FORALL} {{\hbox{$\hskip 11mm \forall \;$}}}

%


\newcommand{\prt}      {{\partial}}

\newcommand{\Dscr} {\mathcal{D}}
\newcommand{\Hscr} {\mathcal{H}}

\newcommand{\Lscr} {\mathcal{L}}

\newcommand{\Oscr} {\mathcal{O}}

\newcommand{\Sscr} {\mathcal{S}}

%

%



\begin{document}

\title[]{Shallow water waves generated by a floating object:
	    a control theoretical perspective}

 \date{\today}

 \author{Pei Su}
 \address{Institut de Math\'ematiques de Bordeaux,
 	Universit\'e de Bordeaux,
 	351, Cours de la Lib\'eration - F 33 405 TALENCE, France}
 \email[Corresponding author]{pei.su@u-bordeaux.fr}
 \urladdr{https://sites.google.com/view/peisu/accueil}
 
 \thanks{ The authors are members of the ETN network ConFlex, funded
 	by the European Union's Horizon 2020 research and innovation programme
 	under the Marie Sklodowska-Curie grant agreement no. 765579. The second 
 	author acknowledges the support of the SingFlows project, grant ANR-18-CE40-0027 
 	of the French National Research Agency (ANR)}

\author{Marius Tucsnak}
 \address{Institut de Math\'ematiques de Bordeaux,
	Universit\'e de Bordeaux,
	351, Cours de la Lib\'eration - F 33 405 TALENCE, France}
\email{marius.tucsnak@u-bordeaux.fr}    \urladdr{https://www.math.u-bordeaux.fr/~mtucsnak/}

\begin{abstract}
We consider a control system describing the interaction of water waves
with a partially immersed rigid body constraint to move only in the
vertical direction. The fluid is modeled by the shallow water equations.
The control signal is a vertical force acting on the floating body.
We first derive the full governing equations of the fluid-body system
in a water tank and reformulate them as an initial boundary value problem
of a first-order evolution system. We then linearize the equations around
the equilibrium state and we study its well-posedness. Finally we focus
on the reachability and stabilizability of the linear system. Our main
result asserts that, provided that the floating body is situated in
the middle of the tank, any symmetric waves with appropriate regularity
can be obtained from the equilibrium state by an appropriate control
force. This implies, in particular, that we can project this system on
the subspace of states with appropriate symmetry properties to obtain a 
reduced system which is approximately controllable and strongly stabilizable. 
Note that, in general, this system is not controllable (even approximately).
\end{abstract}
\maketitle

 {\bf Key words.} Shallow water equations, fluid-structure interactions,
 reachability, stabilizability, operator semigroup, infinite dimensional
 system.

{\bf 2020 AMS subject classifications.} 93B03, 93C20, 93D15, 76B15, 35Q35


\section{Introduction}\label{sec_intro}

In this work we are interested in the following problem: given a rigid
body floating in a fluid at rest in a bounded container, determine the
control force acting on the body in order to obtain a prescribed wave
profile. We assume that the floating object has vertical lateral walls,
with a possibly non-flat but symmetric bottom. More precisely, we assume
that the rigid body is restricted to the heave motion (move vertically)
and that it floats in a rectangular fluid domain which fits in the shallow
water regime (for this concept, please refer to Lannes \cite{lannes2013water,
lannes2020modeling} or Whitham \cite{whitham2011linear}). Moreover, the
body is actuated by a vertical control force and in the horizontal
direction it does not touch the lateral boundaries of the container.
The main contribution in this work consists in showing that, within the
linearized shallow water regime and in a spatially symmetric geometry,
we can find controls steering the system from rest to any symmetric wave
profile having an appropriate space regularity. In order to achieve this
goal we pass through the following preliminary steps:
\begin{itemize}
  \item Deriving the full nonlinear control model and reformulate it as a
   first-order evolution system;
  \item Establishing the well-posedness of the linearized control system.
\end{itemize}

The system we consider is also of interest for modelling and controlling
a class of {\em wave energy converters} (WECs) where all devices are used 
to capture the variations of the free surface waves and convert them into
electricity. The most popular WECs is the so-called {\em Point Absorber},
which consists of a floater on the sea surface and hydraulic cylinders
vertically installed below the floater (for more details, please refer to
Li et al. \cite{li2012wave} and Cretel et al. \cite{cretel2011maximisation}).
Mathematically speaking, this device acting from the bottom of the floating
body produces a vertical force, as a control signal, to synchronize the
motion of the body and of incoming waves and so maximize the energy
production or generate a desired waves.

There are a number of works which are devoted to the subject of
fluid-structure interaction systems. For instance, the case of the body
completely immersed in the fluid is studied in Glass et al.
\cite{glass2016motion}, Lacave and Takahashi \cite{lacave2017small} and
the corresponding control problem is considered in Roy and Takahashi
\cite{roy2020stabilization}, Glass et al. \cite{glass2020external}.
The case when the body is floating i.e. only partially immersed in the
fluid, is setup studied in John \cite{john1949motion, john1950motion} under
simplified assumptions. Recently, Lannes gave in \cite{lannes2017dynamics}
a new formulation of the governing equations and proposed a formulation of
the problem as a coupling between a standard wave model (in which the
surface elevation is free and the pressure is constrained) and a congested
model containing an object (where the pressure is free and the surface
elevation is constrained); this method can be implemented with various
asytmptotic models: non-viscous 1D shallow water model in Iguchi and Lannes
\cite{iguchi2021hyperbolic}, viscous 1D shallow water model in Maity et
al. \cite{maity2019analysis}, 2D radial symmetric shallow water equations
in Bocchi \cite{bocchi2020floating}, Boussinesq equations in Bresch et al.
\cite{bresch2019waves} and also in Beck and Lannes \cite{beck2021freely}.
We also refer to Godlewski et al. \cite{godlewski2018congested} where the
constraint for the equations with the object is released, using a typical
''low Mach'' technique. For other interesting formulations and asymptotic
models (depending on the shallowness parameter) for the water waves system
we refer to Lannes \cite{lannes2013water, lannes2020modeling} and references
therein. As far as we know, all the references on floating bodies mentioned
above are only concerned with the object freely floating  in the fluid and
there are almost no work on the control issue.

\subsection{Notation}\label{notation}
We introduce here, constantly referring to Figure \ref{fig1f}, some
notation which is used throughout this paper. We take the coordinate
system as in Figure \ref{fig1f}, where the ordinate axis passes through
the center of the floating object. The set $\mathcal{I}:=[-l,l]$, called the
\begin{figure}[htbp]
	\centering\includegraphics[width=8.6cm]{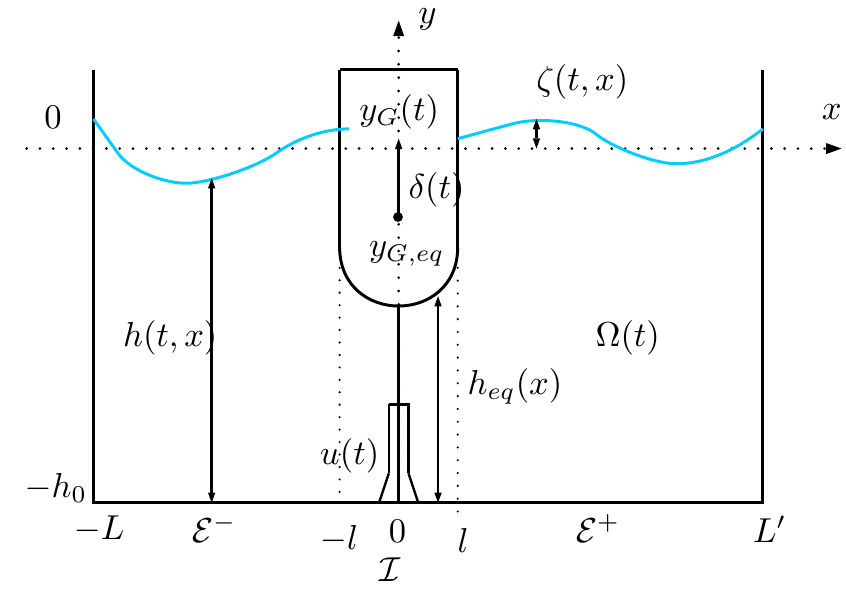}
	\caption{Floating body in a tank filled with water} \label{fig1f}
\end{figure} 
{\em interior region} in the remaining part of this work, is the
projection of the object on the bottom of the fluid domain $\Omega(t)$.
The {\em exterior region} is denoted by $\mathcal{E}:=\mathcal{E}^-\cup
\mathcal{E}^+$ with $\mathcal{E}^-=(-L,-l)$ and $\mathcal{E}^+=(l,L')$.
With the above notation, we assume that the object does not touch one
of the lateral boundaries of $\Om(t)$, i.e. $L\neq l$ and $L'\neq l$.

Let $h_0$ denote the water depth when the object is at equilibrium state.
In the same situation of equilibrium, let $(0, y_{G,\rm eq})$ denote the
coordinate of the center of gravity of the object and let $h_{\rm eq}(x)$
denote the distance between the point of abscissa $x$ of the bottom of
the object and the bottom of the fluid domain. We assume that the bottom
of the object is symmetric with respect to $x=0$, which implies that
$h_{\rm eq}(x)$ is a positive single-valued even function. We denote by $m$
the mass of the object, by $\rho$ the constant density of the fluid.
We also denote by $\zeta(t,x)$ the elevation of the water surface with
respect to the rest state, by $h(t,x)=h_0+\zeta(t,x)$ the total height
of the water column. Moreover, we introduce the horizontal discharge,
denoted by $q(t,x)$, that is the vertical integral of the horizontal
velocity of the fluid (in shallow water regime, it is $h$ times the
velocity of the fluid). We use the notation $\underline{P}$ to represent 
the pressure on the water surface. When the object moves in the vertical
direction, let $(0,y_{G}(t))$ be the position of the center of gravity 
at time $t$, and $\delta(t)=y_{G}(t)-y_{G,\rm eq}$ be the variation of 
the position of the center of mass. Furthermore, the vertical control force
acting on the object at time $t$ is denoted by $u(t)$.

We define the jump and  the average of a function $f$ defined on
$[-l,l]$ by $\llbracket f\m\rrbracket=f(l)-f(-l)$ and $\langle f
\m\rangle=\frac{1}{2}(f(l)+f(-l))$, respectively. Moreover,
$f_{\rm i}=f|_{\mathcal{I}}$ stands for the restriction of $f$ to
the interior domain $\mathcal{I}$ and $f_{\rm e}=f|_{\mathcal{E}}$
denotes the restriction of $f$ to the exterior domain $\mathcal{E}$.

If $k\in\nline$ and $\Oscr\subset\rline$ is an open set, we use the
notation $\Hscr^k(\Oscr)$ for the {\em Sobolev space} formed by the
distributions $f\in\Dscr'(\Oscr)$ having the property that
$\partial_x^\alpha f\in L^2(\Oscr)$ for every integer $\alpha\in[0,k] $.
Finally, if a function $f$ depends on the time $t$, we denote by $\dot{f}$
its derivative with respect to $t$. For a matrix $M$, we denote by
$M^\intercal$ the transpose of $M$. We use the notation $X^\perp$ to
represent the orthogonal complement of the space $X$. For a complex number
$\alpha\in\cline$, we use $\overline{\alpha}$ to represent the complex
conjugate of $\alpha$.

\subsection{Main results}
The departure point of our derivation of the control system describing
the interaction of the floating body with the fluid is a nonlinear
model introduced in Lannes \cite{lannes2017dynamics}, where the fluid
fills an infinite strip in the horizontal direction. Taking the
control term into account, the governing equations of the floating body
system in the fluid domain $\Om(t)$ can be obtained from the conservation
laws of the total energy and of the volume of the water. In this case,
the interior surface pressure $\underline P_{\rm i}$ is not only determined 
by the fluid dynamics, but also by the external vertical force below the
floater. We show that $\underline P_{\rm i}$ satisfies a second-order
elliptic equation, and its source term and boundary term are given in 
terms of $\delta$, $\langle q_{\rm i}\rangle$ and the exterior functions
$\zeta_{\rm e}$, $q_{\rm e}$. Based on the nonlinear shallow water equations
and Newton's equation, we derive the equations for $\delta$ and
$\langle q_{\rm i}\rangle$ and find that their source terms again
consist of the exterior functions, respectively. In this way, the whole
system is converted to an initial and boundary value problem defined only 
in the exterior domain $\mathcal{E}$. Furthermore, it can be reformulated 
as a first-order evolution equation with the state $z$ as
\begin{equation*}\label{statez}
z=\begin{bmatrix}\m \zeta & q & \langle q_{\rm i}\rangle &
\delta & \dot \delta\m \end{bmatrix}^\intercal.
\end{equation*}
For the derivation of the fully nonlinear model, please refer to Section
\ref{sec_model}. Using the notations introduced above, we linearize the
nonlinear model around the equilibrium state  $\begin{bmatrix}\m \zeta &
q & \langle q_{\rm i}\rangle & \delta & \dot \delta\m \end{bmatrix}^\intercal
=\begin{bmatrix}\m 0 & 0 & 0 & 0 & 0\m \end{bmatrix}^\intercal$
and the resulting linearized fluid-body system, for every $t\geq 0$ and
$x\in\mathcal{E}$, reads
\begin{equation}\label{lineareqsintro}
\left\{\begin{aligned}
&\prt_t\zeta=-\prt_x q,\\
&\prt_t q=-gh_0\m\prt_x\zeta,\\
&\frac{\dd}{\dd t}\langle q_{\rm i}\rangle=-\frac{g}{2l\underline \alpha}
\llbracket \zeta\m\rrbracket,\\
&\ddot{\delta}=-\frac{2\rho g\m l}{\underline M}\delta+\frac{2\rho
	g\m l}{\underline M}\langle\zeta\m\rangle+\frac{1}{\underline M}u,
\end{aligned}\right.
\end{equation}
with the transmission conditions
\begin{equation}\label{trcon1}
 \langle q\rangle=\langle q_{\rm i}\rangle, \qquad \llbracket q
\rrbracket =-2l\dot{\delta},
\end{equation}
and boundary conditions
\begin{equation}\label{boun1}
q(t,-L)=0=q(t,L').
\end{equation}
The constants $\underline \alpha$ and $\underline M$ in
\rfb{lineareqsintro} are defined in Section \ref{sec_control}.
Let the initial data of \rfb{lineareqsintro} be
\begin{equation}\label{init1}
z_0=\begin{bmatrix}
\m \zeta_0 & q_0 & \langle q_{\rm i}\rangle_0
& \delta_0 & \delta_1\m \end{bmatrix}^\intercal.
\end{equation}

Our first result is the well-posedness of the linear system 
\rfb{lineareqsintro}--\rfb{init1}. For the precise definition of the 
notion of solution of \rfb{lineareqsintro}--\rfb{init1}
we refer to Section \ref{sec_control}.

\begin{thm}\label{the1st}
The linearized floating body system \rfb{lineareqsintro}--\rfb{init1} forms
a linear control system with the state space
$$X=\left\{\begin{bmatrix}\m \zeta & q & \langle q_{\rm i}\rangle &
\delta & \eta\m \end{bmatrix}^\intercal\in \left(L^2(\mathcal{E})
\right)^2\times\cline^3\m\left|\m\int_{\mathcal{E}}\zeta(x)\dd x+2l
\m\delta=0\right. \right\}$$
and the input space $U=\cline$. For $u\in L^2_{\rm loc}([0,\infty); U))$,
the initial data $z_0\in X$, the system \rfb{lineareqsintro}--\rfb{init1}
admits a unique solution $z\in C([0,\infty); X)$.
\end{thm}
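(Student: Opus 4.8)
The plan is to recast \rfb{lineareqsintro}--\rfb{init1} as a first-order abstract Cauchy problem $\dot z = Az + Bu$, $z(0)=z_0$, on the Hilbert space $X$, to show that $A$ generates a strongly continuous group of operators on $X$ while $B\in\Lscr(U,X)$ is bounded, and then to invoke the standard theory of linear control systems with bounded control operator to obtain the unique continuous solution.

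First I would write down the candidate generator: let $A$ act on $z = [\zeta, q, \langle q_{\rm i}\rangle, \delta, \eta]^\intercal$ by
\[
Az = \bbm{-\partial_x q \\ -gh_0\,\partial_x\zeta \\ -\frac{g}{2l\underline\alpha}\llbracket\zeta\rrbracket \\ \eta \\ -\frac{2\rho g l}{\underline M}\delta + \frac{2\rho g l}{\underline M}\langle\zeta\rangle},
\]
that is, the right-hand side of \rfb{lineareqsintro} with $u=0$ and $\eta=\dot\delta$, on the domain
\[
\Dscr(A) = \left\{z\in X : \zeta, q\in\Hscr^1(\mathcal{E}),\ \langle q\rangle = \langle q_{\rm i}\rangle,\ \llbracket q\rrbracket = -2l\eta,\ q(-L)=q(L')=0\right\},
\]
so that the transmission conditions \rfb{trcon1} and the boundary conditions \rfb{boun1} are encoded in $\Dscr(A)$. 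The input operator is $Bu = [\,0,0,0,0,u/\underline M\,]^\intercal$, which plainly belongs to $\Lscr(U,X)$.

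Next I would fix the right inner product. A single integration by parts suggests the energy norm
\[
\|z\|_X^2 = \int_\mathcal{E}\left(gh_0|\zeta|^2 + |q|^2\right)dx + 2l\underline\alpha h_0|\langle q_{\rm i}\rangle|^2 + 2gh_0 l|\delta|^2 + \frac{h_0\underline M}{\rho}|\eta|^2,
\]
whose weights are all positive, so the associated inner product makes $X$ a Hilbert space with norm equivalent to the ambient one. The point of these particular weights is that, for $z\in\Dscr(A)$, the boundary terms produced by $-gh_0\int_\mathcal{E}\partial_x(q\bar\zeta)\,dx$ can be rewritten using $q(\pm l)=\langle q_{\rm i}\rangle\mp l\eta$ (from \rfb{trcon1}) and $q(\pm L)=0$, yielding the contribution $gh_0\langle q_{\rm i}\rangle\overline{\llbracket\zeta\rrbracket} - 2gh_0 l\,\eta\overline{\langle\zeta\rangle}$; this combines with the finite-dimensional terms so that $\Re\langle Az, z\rangle_X = 0$ for all $z\in\Dscr(A)$. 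Consequently both $A$ and $-A$ are dissipative.

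It then remains to prove the range condition $\Ran(\lambda I - A) = X$ for some $\lambda > 0$, which I regard as the main obstacle. Solving $(\lambda I - A)z = f$ reduces, after eliminating $\zeta$, to a Helmholtz-type two-point boundary value problem $\partial_{xx}q - \frac{\lambda^2}{gh_0}q = \cdots$ on each connected component of $\mathcal{E}$, subject to \rfb{boun1}, whose traces at $\pm l$ are coupled to the finite-dimensional unknowns $\langle q_{\rm i}\rangle,\delta,\eta$ through the transmission conditions \rfb{trcon1}; I would solve it either by explicit integration or, more systematically, via a Lax--Milgram argument for the associated coercive sesquilinear form. One must also check that the resolvent keeps the solution inside $X$: integrating the first line of $(\lambda I-A)z=f$ over $\mathcal{E}$ and using \rfb{boun1}, \rfb{trcon1} and the fourth line gives $\lambda\bigl(\int_\mathcal{E}\zeta\,dx + 2l\delta\bigr) = \int_\mathcal{E}f_1\,dx + 2l f_4 = 0$, so the volume constraint defining $X$ is automatically recovered (and, taken with $u=0$, shows that $X$ is invariant under the flow). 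With $\Re\langle Az,z\rangle_X=0$ and the range condition in hand, the Lumer--Phillips theorem applied to $\pm A$ shows that $A$ generates a unitary $C_0$-group on $X$; since $B$ is bounded, the variation of parameters formula yields, for every $z_0\in X$ and every $u\in L^2_{\rm loc}([0,\infty);U)$, a unique solution $z\in C([0,\infty);X)$, which is exactly the assertion.
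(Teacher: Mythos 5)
Your proposal is correct and follows essentially the same route as the paper: recast the system as $\dot z = Az+Bu$ on the energy space $X$ (your inner product is just the constant multiple $\tfrac{2h_0}{\rho}$ of the paper's), verify conservativity by the same integration by parts using the trace identities $q(\pm l)=\langle q_{\rm i}\rangle\mp l\eta$, establish a range condition, and conclude from boundedness of $B$ and the variation of parameters formula. The only cosmetic difference is that the paper proves surjectivity of $A$ itself (at $\lambda=0$, by explicit integration, also recovering the volume constraint there) and then invokes the criterion ``skew-symmetric and onto implies skew-adjoint'' together with Stone's theorem, whereas you propose the range condition at some $\lambda>0$ and Lumer--Phillips applied to $\pm A$ --- equivalent packagings of the same argument.
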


Our main interest is to study the reachable space of the control system
\rfb{lineareqsintro}--\rfb{init1}, when the object is put in the middle
of the fluid domain in the horizontal direction i.e. $L=L'$. This space
is formed of all the states that can be reached from equilibrium by means
of $L^2$ controls $u$. For every $\tau>0$, the bounded linear map $\Phi_\tau:
L^2([0,\infty);U)\to X$ is called an {\em input-to-state map} (briefly,
input map) of the system \rfb{lineareqsintro}--\rfb{init1} with zero
initial data (i.e. $z_0=0$) defined by
\begin{equation}\label{phitauintro}
\Phi_\tau u=z(\tau) \FORALL u\in L^2_{\rm loc}([0,\infty); U)).
\end{equation}
Notice that when $L'=L$ and the initial state is an equilibrium one,
the whole floating body-fluid system preserves its symmetry for all
$t\geqslant 0$, in the sense that $\zeta$ and $q$ satisfy
$$\zeta(t,-x)=\zeta(t,x)\qquad q(t,-x)=-q(t,x) \FORALL x\in\mathcal{E}. $$
We define the symmetry space $S$ as follows:
$$S=\begin{Bmatrix}\begin{bmatrix}\m \zeta & q & \langle
q_{\rm i}\rangle & \delta & \eta\m \end{bmatrix}^\intercal\in
\left(L^2(\mathcal{E})\right)^2\times\cline^3\m \quad\text{and}\quad\\
\zeta(-x)=\zeta(x),\quad q(-x)=-q(x) \end{Bmatrix}. $$
To state the result, we introduce the Hilbert space $W$:
$$ W=\begin{Bmatrix} \begin{bmatrix}\m \zeta & q & \langle
q_{\rm i}\rangle & \delta & \eta\m \end{bmatrix}^\intercal\in
\left(\Hscr^1(\mathcal{E})\right)^2\times\cline^3\m\left|\m
\int_{\mathcal{E}}\zeta(x)\dd x+2l\m\delta=0,\right.\vspace{1.9mm}\\
\llbracket q\m\rrbracket=-2l\m\eta,\quad\langle q\rangle=\langle
q_{\rm i}\rangle\quad\text{and}\quad q(-L)=0=q(L') \end{Bmatrix}. $$

\begin{thm}\label{reachspace}
Assume that the object floats in the middle of the fluid domain in
the horizontal direction, i.e. $L'=L$. Then for every
$\tau>\frac{2(L-l)}{\sqrt{gh_0}}$, we have
\begin{equation}\label{resmain}
(W\cap S)\subset\Ran \Phi_\tau\subset (X\cap S),
\end{equation}
where each inclusion is dense and with continuous embedding.
\end{thm}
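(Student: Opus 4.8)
The plan is to establish the two inclusions in \rfb{resmain} separately. The right-hand inclusion $\Ran\Phi_\tau\subset(X\cap S)$ I would obtain from symmetry: for $L'=L$ the system \rfb{lineareqsintro}--\rfb{boun1} and the forcing (a single vertical force on a body centred at $x=0$) are invariant under the reflection $x\mapsto-x$, $q\mapsto-q$, $\zeta\mapsto\zeta$. Since the equilibrium data is fixed by this reflection, the uniqueness part of Theorem \ref{the1st} forces every solution issued from rest to keep $\zeta(t,\cdot)$ even and $q(t,\cdot)$ odd, so that $z(\tau)\in S$; that $z(\tau)\in X$ is built into the definition of $\Phi_\tau$.

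For the left-hand inclusion I would first use the symmetry to collapse the problem onto the half-interval $(l,L)$. Evenness of $\zeta$ and oddness of $q$ give $\llbracket\zeta\rrbracket=0$, $\langle\zeta\rangle=\zeta(l)$, $\langle q\rangle=0$ and $\llbracket q\rrbracket=2q(l)$, so the third line of \rfb{lineareqsintro} together with \rfb{trcon1} forces $\langle q_{\rm i}\rangle\equiv0$ and leaves the single inner relation $q(t,l)=-l\dot\delta(t)$. What survives is the one-dimensional acoustic system $\prt_t\zeta=-\prt_x q$, $\prt_t q=-gh_0\prt_x\zeta$ on $(l,L)$, the wall condition $q(t,L)=0$, the controlled flux $q(t,l)=-l\dot\delta(t)$, and the scalar body equation carrying the input $u$. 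Crucially, on $W\cap S$ the body data are \emph{slaved} to the wave data: under the symmetry the three algebraic constraints defining $W$ reduce to $\langle q_{\rm i}\rangle=0$, $\eta=-q(l)/l$ and $\delta=-\tfrac1l\int_l^L\zeta\,\dd x$ (the last via $\int_{\mathcal E}\zeta\,\dd x+2l\delta=0$). It therefore suffices to reach the wave part $(\zeta_T,q_T)\in(\Hscr^1(l,L))^2$ with $q_T(L)=0$; the volume identity $\tfrac{\dd}{\dd t}\int_l^L\zeta=q(t,l)$ together with the inner trace of $q$ then deliver $\delta(\tau)=-\tfrac1l\int_l^L\zeta_T\,\dd x$ and $\dot\delta(\tau)=-q_T(l)/l$ for free, matching the slaved targets.

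The reachability of the wave part is a boundary-control problem for a one-dimensional wave on an interval of length $L-l$, actuated through the flux at $x=l$ against a reflecting wall at $x=L$. I would solve it explicitly through the Riemann invariants $q\pm c\,\zeta$, $c:=\sqrt{gh_0}$, which propagate at speed $c$ and reflect with a sign change at the wall; a single round trip takes precisely $2(L-l)/c$, the origin of the threshold $\tau>2(L-l)/\sqrt{gh_0}$ in \rfb{resmain}. For $\tau$ beyond this value, d'Alembert's representation lets me read off, by integrating backward along the reflected characteristics, an inner flux profile $q(\cdot,l)$ that drives the zero state to $(\zeta_T,q_T)$ and vanishes at $t=0$. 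Because the target lies in $(\Hscr^1)^2$, this profile is one time-derivative smoother, $q(\cdot,l)\in H^1(0,\tau)$; setting $\dot\delta=-q(\cdot,l)/l$ and integrating produces $\delta\in H^2(0,\tau)$ with the rest data $\delta(0)=\dot\delta(0)=0$, after which the last equation of \rfb{lineareqsintro} \emph{defines} the control $u=\underline M\ddot\delta+2\rho g l\,\delta-2\rho g l\,\zeta(t,l)$. This $u$ lies in $L^2(0,\tau)$ as soon as one invokes the hidden-regularity trace bound $\zeta(\cdot,l)\in L^2(0,\tau)$ for the wave equation.

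For the density and continuity assertions I would note that $\Hscr^1(\mathcal E)$ is dense in $L^2(\mathcal E)$ and that the finitely many linear constraints cutting out $W$ and $X$ are mutually compatible, so that $W\cap S$ is continuously and densely embedded in $X\cap S$; the chain \rfb{resmain} then makes $\Ran\Phi_\tau$ dense in $X\cap S$ as well, while the continuity of all embeddings is immediate from the respective norms. I expect the principal obstacle to lie not in the controllability itself—which for this reflected one-dimensional wave is essentially explicit—but in the regularity bookkeeping of the hard inclusion: matching an $\Hscr^1$ spatial target to an $H^1$-in-time inner flux, and then certifying $u\in L^2(0,\tau)$ through the sharp boundary-trace estimate for $\zeta$.
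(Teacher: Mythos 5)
Your proposal is correct in substance, but it proves the hard (left) inclusion by a genuinely different route than the paper. The paper never constructs a control: it works with the reduced pair $(A_{\rm sym},B)$, computes the eigenvalues and eigenvectors of $A_{\rm sym}$ (Proposition \ref{sprctrumAr}), establishes the asymptotic spectral gap $\frac{\sqrt{gh_0}}{L-l}\pi$ together with the lower bound $|B^*\widehat\phi_{{\rm sym},k}|\geq C/|k|$ in \rfb{lowomegark}, and then applies Ingham's inequality for $\tau>\frac{2(L-l)}{\sqrt{gh_0}}$ to get the weighted observability estimate \rfb{ingsym}, i.e. $\lVert\Phi^*_{{\rm sym},\tau}z\rVert^2_{L^2([0,\tau];U)}\geq c\lVert z\rVert^2_{\Dscr(A_{\rm sym})'}$; a standard closed-graph/duality argument then yields $\Ran\Phi_{{\rm sym},\tau}\supset\Dscr(A_{\rm sym})=W\cap S$, while the right inclusion comes, exactly as in your first paragraph, from the $\tline$-invariance of the symmetric subspace (Proposition \ref{invariant} and Remark \ref{symexplain}). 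Your argument replaces all of this spectral machinery by an explicit d'Alembert construction: you invert the causality $u\to\delta\to q(t,l)$, treat the trace $q(\cdot,l)$ as a flux control for the half-interval wave system, and recover $u$ a posteriori from the $\delta$-equation. This does work, and the two delicate points are precisely where the hypotheses enter: the prescribed outgoing Riemann invariant $r_+(\cdot,l)$, read off backwards from the target, is continuous at the junction $t=\tau-(L-l)/\sqrt{gh_0}$ exactly because $q_T(L)=0$ (a constraint built into $W$), and the strict inequality $\tau>\frac{2(L-l)}{\sqrt{gh_0}}$ is what leaves room for an $H^1$ ramp from $r_+(0,l)=0$ up to the value $-\bigl(q_T(l)-\sqrt{gh_0}\,\zeta_T(l)\bigr)$ required at $t=\tau-\frac{2(L-l)}{\sqrt{gh_0}}$; without that slack the flux would jump, giving $\ddot\delta\notin L^2$ and hence $u\notin L^2$ — so these details, which you flag but do not carry out, are genuinely where the work lies. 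As for what each approach buys: yours is elementary, fully constructive (it exhibits the control), and makes the threshold time transparent as a characteristic round-trip time; the paper's approach produces the quantitative inequality \rfb{ingsym}, which is reused verbatim in Corollary \ref{symresult} to obtain approximate controllability and the strong stabilizability rate \rfb{retasym} — consequences that your construction does not deliver by itself.
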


\begin{rmk}
{\rm  In the symmetric case described above, the average horizontal
discharge $\langle q_{\rm i}\rangle$ and the jump of the elevation
$\llbracket \zeta_{\rm e}\rrbracket$ are both zero, so that the state
$z$ and the linear control system \rfb{lineareqsintro}--\rfb{init1}
can be simplified. We see from the first inclusion in \rfb{resmain}
that any symmetric state with the regularity as in $W$ can be reached
by the control system \rfb{lineareqsintro}--\rfb{init1} from the origin.
The second inclusion in \rfb{resmain} means that the system is not
approximately controllable in $X$, but in its symmetric subspace
$X\cap S$.  More details on this symmetric case are provided in Section
\ref{subsec_symmetric}.
}
\end{rmk}
\subsection{Organization of the paper}
In Section \ref{sec_model}, we give a detailed derivation for the full
governing equations of the floating body system in shallow water, in
particular, with a control term. Moreover, we reformulate the equations
into a first-order evolution system only defined in the exterior domain
with tranmission conditions. Then we consider in Section \ref{sec_control}
the linearized system and establish its well-posedness by analysing the 
spectral properties of the evolution operators involved in the control model.
Section \ref{consta_sec} is devoted to studying the reachability
and stabilizability of the linear control system. In the last section, we 
give some comments for the situation in the general case and introduce some 
open problems.

\section{Some background on nonlinear modelling of floating body -
	    shallow water interaction}
\label{sec_model}

In this section, we derive the nonlinear governing equations describing
the motion of the floating object in $\Om(t)$, in the presence of a control
applied from the bottom of the object. We follow the approach developed in
\cite{lannes2017dynamics, beck2021freely} with modifications to include the
external force $u(t)$ and the presence of the vertical boundaries of the
water tank. Here we assume that the fluid fills the domain $\Om(t)$, that
it is homogeneous, incompressible, inviscid and irrotational. We also assume
that we are here in a configuration where wave motion is correctly described 
by the nonlinear shallow water equations. We know from
\cite{lannes2017dynamics} that the {\em nonlinear shallow water equations 
with a floating structure} are given, for every 
$t\geq0$ and $x\in\rline$, by
\begin{equation}\label{nsw}
  \left\{ \begin{aligned}
  &\prt_t\zeta+\prt_xq=0,\\
  &\prt_tq+\prt_x\left(\frac{q^2}{h}\right)+gh\m\prt_x\zeta=
  -\frac{h}{\rho}\m\prt_x\underline{P},
  \end{aligned} \right.
\end{equation}
where $g$ is the gravity acceleration. The exterior surface pressure is 
zero, i.e.
$\underline{P}_{\rm e}=0$,
while the interior pressure $\underline{P}_{\rm i}$
is determined by the motion of the fluid below the object and also the
control signal $u$. We denote by $\zeta_{\rm w}(t,x)$ the parameterization
of the part of the bottom of the object in contact with the fluid (the
subscript ''$\rm w$'' represents the ''wetted'' part of the object).
Therefore, we have the water surface in the interior domain $\mathcal{I}$
that matches the bottom of the object, i.e.
\begin{equation}\label{match}
  \zeta_{\rm w}(t,x)=\zeta_{\rm i}(t,x)\FORALL x\in\mathcal{I}.
\end{equation}
It is not difficult to see that we have the relation
\begin{equation}\label{relationzeta}
\zeta_{\rm w}(t,x)=\delta(t)+h_{\rm eq}(x)-h_0 \FORALL x\in\mathcal{I}.
\end{equation}
Moreover, we obtain from \rfb{relationzeta} that $\prt_t\zeta_{\rm w}
=\dot{\delta}$, which is the kinematic condition on the water surface.
We consider in what follows restricting the model \rfb{nsw} to the
interval $[-L,L']$, $-L$ and $L'$ being the horizontal coordinates of
the water tank $\Om(t)$, in particular with the control term. To this end,
we observe that the following conditions need to be satisfied:
\begin{itemize}
\item {\em The conservation of the volume of the water}.\\
We first notice that the two vertical boundary of $\Om(t)$ are impermeable,
which implies that
  \begin{equation*}\label{qL1}
    q(t,-L)=0=q(t,L').
  \end{equation*}
Therefore, the conservation of the volume of the water implies that
  \begin{equation}\label{qboundary}
    q_{\rm i}(t,\pm l)=q_{\rm e}(t,\pm l).
  \end{equation}
\item {\em The conservation of the total energy of the fluid-structure
	  system}.\\
We denote by $E_f$ and $E_s$ the mechanical energy of the fluid and the
mechanical energy of the solid, respectively. Because of the existence
of the vertical force $u$, the total energy of the floating object system
$E_{\rm tot}(t)=E_f(t)+E_s(t)$ should satisfy
  \begin{equation}\label{energyconserve}
    \frac{\dd}{\dd t}E_{\rm tot}(t)=u(t)\dot{\delta}(t).
  \end{equation}
\end{itemize}


Based on the conservation of the energy \rfb{energyconserve}, we derive
the boundary conditions of the surface pressure $\underline{P}_{\rm i}$
at the two contact points $x=\pm l$. To do this, we first note that the
mechanical energy of the solid $E_s$ is
$$E_s(t)=mg\delta(t)+\frac{1}{2}m\dot{\delta}^2(t).$$
Recalling the definition of the horizontal discharge $q$, the mechanical
energy of the fluid $E_f$ is
$$E_f(t)=\frac{\rho}{2}\int_{\mathcal{E}\cup\mathcal{I}}\left(
  g\m\zeta^2(t,x)+\frac{q^2}{h}(t,x)\right)\dd x.$$
Note that the object at equilibrium satisfies {\em Archimedes' principle},
we have
\begin{equation}\label{archimede}
	m=\rho\int_{-l}^l\left(h_0-h_{\rm eq}(x)\right)\dd x.
\end{equation}
Newton's law for the motion of the object, together with \rfb{archimede},
implies that
\begin{equation}\label{newtonclear}
	m\ddot{\delta}(t)+2l\rho g\m\delta(t)=\int_{-l}^{l}\left(\underline{P}
	_{\rm i}+\rho g\m\zeta_{\rm i}\right)\dd x+u(t),
\end{equation}
which means that the motion of the object is determined by its weight, the
hydrodynamic force and the external force.
Based on the above analysis, we give in the following proposition the
boundary condition of $\underline P_{\rm i}$.


\begin{prop}
Assume that the functions $\zeta$, $q$, $h$, $\delta$ and $\underline{P}$
are smooth on $\mathcal{I}$ and $\mathcal{E}$. Then the total energy of the
floating body system $E_{\rm tot}$ is conserved if the interior pressure
$\underline P_{\rm i}$ satisfies
\begin{equation}\label{Pbc}
  \underline{P}_{\rm i}(t,\pm l)=\rho g\left(\zeta_{\rm e}(t,\pm l)-
  \zeta_{\rm i}(t,\pm l)\right)+\mathfrak{B}_{\rm e}(t,\pm l)-
  \mathfrak{B}_{\rm i}(t, \pm l),
\end{equation}
where $\mathfrak{B}$ is defined as
\begin{equation}\label{F}
 \mathfrak{B}= \frac{\rho\m q^2}{ 2\m h^2}.
\end{equation}
\end{prop}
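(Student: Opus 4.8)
The plan is to compute $\frac{\dd}{\dd t}E_{\rm tot}(t)$ directly, using the evolution equations \rfb{nsw} to convert time derivatives into spatial ones, and then to impose that the resulting expression equals $u(t)\dot\delta(t)$. This will force a constraint on $\underline P_{\rm i}$ at the contact points $x=\pm l$, which should be exactly \rfb{Pbc}. First I would differentiate $E_f(t)=\frac{\rho}{2}\int_{\mathcal{E}\cup\mathcal{I}}\bigl(g\m\zeta^2+\frac{q^2}{h}\bigr)\dd x$ under the integral sign. Using $\prt_t\zeta=-\prt_xq$ from the first equation of \rfb{nsw}, and the second equation to handle $\prt_tq$, the integrand should collapse into a total spatial derivative plus a term proportional to $\prt_x\underline P$; the quantity $\mathfrak B=\frac{\rho q^2}{2h^2}$ defined in \rfb{F} is precisely the combination that arises from the kinetic energy flux $\frac{q}{h}\bigl(\frac{q^2}{h}+gh\m\zeta\bigr)$ terms, so $\mathfrak B$ will appear naturally in the boundary contributions.

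The key algebraic step is to integrate by parts over $\mathcal I$ and over $\mathcal E$ separately, because the pressure is discontinuous across $x=\pm l$ (only $\underline P_{\rm i}$ is active in the interior, while $\underline P_{\rm e}=0$ in the exterior). After integration by parts, the bulk terms should cancel by virtue of the equations of motion, leaving only boundary terms evaluated at $x=\pm l$ (the endpoints $x=-L,L'$ contribute nothing because $q$ vanishes there by impermeability). These boundary terms will involve the flux $\frac{q}{h}$ times $\bigl(\mathfrak B+g\rho\m\zeta+\underline P\bigr)$ evaluated just inside and just outside each contact point. Here I would use the matching condition $\zeta_{\rm w}=\zeta_{\rm i}$ from \rfb{match} together with $\prt_t\zeta_{\rm w}=\dot\delta$, and the continuity of the discharge \rfb{qboundary}, $q_{\rm i}(t,\pm l)=q_{\rm e}(t,\pm l)$, to relate the interior and exterior boundary values. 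The factor $\frac{q}{h}$ at $\pm l$ is common to interior and exterior (since $q$ and $h$ match there), so the net boundary contribution reduces to $\frac{q}{h}\bigl(\pm l\bigr)$ multiplied by the jump $\bigl(\underline P_{\rm i}+\rho g\m\zeta_{\rm i}+\mathfrak B_{\rm i}\bigr)-\bigl(\rho g\m\zeta_{\rm e}+\mathfrak B_{\rm e}\bigr)$.

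Next I would differentiate the solid energy $E_s(t)=mg\delta+\frac{1}{2}m\dot\delta^2$, obtaining $\dot E_s=\dot\delta\bigl(mg+m\ddot\delta\bigr)$, and substitute Newton's law \rfb{newtonclear} to express $m\ddot\delta$ in terms of $\int_{-l}^l(\underline P_{\rm i}+\rho g\zeta_{\rm i})\dd x$ and $u$. The $u\dot\delta$ piece is exactly what we want on the right-hand side of \rfb{energyconserve}, so matching the remaining terms of $\dot E_f+\dot E_s$ to zero yields the condition on $\underline P_{\rm i}$. The main obstacle I anticipate is bookkeeping the boundary terms correctly with signs, in particular tracking the outward-versus-inward orientation at $x=l$ and $x=-l$ and ensuring the volume term $\int_{-l}^l\prt_t\zeta_{\rm i}\dd x$ (arising from differentiating the interior integral, since $\zeta_{\rm i}$ is constrained by the object) combines with the Newton's-law contribution so that everything except $u\dot\delta$ cancels. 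Once the boundary flux is isolated and the interior integral from Newton's law is accounted for, the identity $\underline P_{\rm i}(t,\pm l)=\rho g(\zeta_{\rm e}-\zeta_{\rm i})(t,\pm l)+\mathfrak B_{\rm e}(t,\pm l)-\mathfrak B_{\rm i}(t,\pm l)$ in \rfb{Pbc} should fall out as the necessary and sufficient pointwise condition at each contact point.
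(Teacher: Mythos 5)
Your overall route is the same as the paper's: differentiate $E_f$ and $E_s$, use the shallow water equations \rfb{nsw} and Newton's law \rfb{newtonclear} to turn the bulk terms into total spatial derivatives (with the interior pressure integral $\int_{-l}^{l}\underline P_{\rm i}\m\dd x$ from Newton's law cancelling the term produced by $\prt_x q_{\rm i}=-\dot\delta$), keep only boundary terms at $x=\pm l$ (the ends $x=-L,\m L'$ die by impermeability), and read \rfb{Pbc} off the requirement that these contributions cancel. The paper writes this compactly as $\frac{\dd}{\dd t}E_{\rm tot}=\llbracket\mathfrak{F}_{\rm e}-\mathfrak{F}_{\rm i}\rrbracket+u\dot{\delta}$ with energy flux $\mathfrak{F}=q\left(\rho g\m\zeta+\underline{P}+\mathfrak{B}\right)$.

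However, one step of your argument is wrong as stated, and it is not cosmetic. You claim the boundary contribution carries the common factor $\frac{q}{h}$ ``since $q$ and $h$ match there.'' The discharge $q$ is indeed continuous at $x=\pm l$ by \rfb{qboundary}, but $h$ is \emph{not}: the exterior height at the contact points is $h_0+\zeta_{\rm e}(t,\pm l)$, while the interior one is $h_{\rm eq}(\pm l)+\delta(t)$, and the free surface is discontinuous across the vertical side walls of the partially immersed object. This matters twice. First, the flux factor that actually comes out of the computation is $q$, not $q/h$: the kinetic terms combine into $-\prt_x\left(q\m\mathfrak{B}\right)$, giving $\mathfrak{F}=q\left(\rho g\m\zeta+\underline{P}+\mathfrak{B}\right)$; had the factor been $q/h$, it would jump at $\pm l$ and could not be pulled out of the bracket at all. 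Second, and more seriously, if $h$ were continuous at $\pm l$, then $\zeta_{\rm e}(t,\pm l)=\zeta_{\rm i}(t,\pm l)$ and consequently $\mathfrak{B}_{\rm e}(t,\pm l)=\mathfrak{B}_{\rm i}(t,\pm l)$, so \rfb{Pbc} would collapse to $\underline{P}_{\rm i}(t,\pm l)=0$: your premise trivializes the very condition you are trying to derive. The jump of $\zeta$ (hence of $h$ and $\mathfrak{B}$) across the contact points is precisely what gives \rfb{Pbc} its content. A final, minor point: continuity of $q$ plus \rfb{Pbc} gives pointwise equality of the fluxes at each contact point, which is sufficient for \rfb{energyconserve}; the paper only needs the weaker jump identity $\llbracket\mathfrak{F}_{\rm e}\rrbracket=\llbracket\mathfrak{F}_{\rm i}\rrbracket$, so your claim that \rfb{Pbc} ``falls out as the necessary and sufficient'' condition overstates both what is needed and what the proposition asserts (an ``if,'' not an ``if and only if'').
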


\begin{proof}
Taking the derivative of $E_s$ and $E_f$ and using \rfb{newtonclear}, it
is not difficult to obtain that
$$\frac{\dd}{\dd t}E_{\rm tot}(t)=\llbracket\mathfrak{F}_{\rm e}-
\mathfrak{F}_{\rm i} \rrbracket+u(t)\dot{\delta}(t), $$	
where the energy flux $\mathfrak{F}$ is
$$\mathfrak{F}(\zeta,q)=q\left(\rho g\m\zeta+\underline{P}
+\mathfrak{B}\right).$$
Therefore, we conclude that the total energy is conserved in the sense
of \rfb{energyconserve} if $\llbracket\mathfrak{F}_{\rm e}\rrbracket=
\llbracket\mathfrak{F}_{\rm i}\rrbracket$, which follows from the boundary
values of the interior pressure given in \rfb{Pbc}.
\end{proof}

It is worthwhile noting that actually $\mathfrak{B}_{\rm i}(t,\pm l)$
is fully determined by $\delta$ and $\langle q_{\rm i}\m\rangle$. Indeed,
we denote by $h_{\rm w}(t,x)$ the height of the water column
in the interior domain $\mathcal{I}$. By the definition of $h$ and
\rfb{relationzeta} we know that
\begin{equation}\label{hw}
  h_{\rm w}(t,x)=h_0+\zeta_{\rm w}(t,x)=h_{\rm eq}(x)+\delta(t)
  \FORALL x\in\mathcal{I}.
\end{equation}
Together with \rfb{match} and the kinematic condition $\prt_t\zeta_
{\rm w}=\dot{\delta}$, we obtain that the system \rfb{nsw} restricted to
the interior domain, for all $t\geq0$ and $x\in\mathcal{I}$, reads
\begin{equation}\label{intereq}
  \left\{ \begin{aligned}
  &\prt_x q_{\rm i}=-\dot{\delta},\\
  &\prt_tq_{\rm i}+\prt_x\left(\frac{q_{\rm i}^2}{h_{\rm w}}\right)+
  gh_{\rm w}\m\prt_x\zeta_{\rm w}=-\frac{h_{\rm w}}{\rho}\m\prt_x
  \underline{P}_{\rm i}.
\end{aligned} \right.
\end{equation}
The first equation in \rfb{intereq} implies that
\begin{equation}\label{qi}
  q_{\rm i}(t,x)=-x\m\dot{\delta}(t)+\langle q_{\rm i}\rangle
  \FORALL x\in\mathcal{I}.
\end{equation}
Recalling the definition of $\mathfrak{B}$ in \rfb{F} we have
$$\mathfrak{B}_{\rm i}(t,\pm l)=\frac{\rho}{2}\m\left(\frac{
  q_{\rm i}(t,\pm l)}{ \m h_{\rm w}(t,\pm l)}\right)^2=\frac{\rho}{2}
  \left(\frac{\mp l\m\dot{\delta}(t)+\langle q_{\rm i}\rangle}{h_{\rm eq}
  (\pm l)+\delta(t)}\right)^2. $$

Up to now, we obtain the governing equations describing the dynamics
of the floating object in the bounded domain $\Om(t)$ with the
control term $u$. For the sake of convenience, we put all the equations
together as follows, for all $t\geq0$,
\begin{subequations}\label{governing}
  \begin{alignat}{15}
  &\prt_t\zeta+\prt_x q=0 & x\in\mathcal{I}\cup\mathcal{E},
  \label{govern-1}\\
  &\prt_tq+\prt_x\left(\frac{q^2}{h}\right)+gh\m\prt_x\zeta=-\frac{h}
  {\rho}\m\prt_x\underline{P}
  &x\in\mathcal{I}\cup\mathcal{E}, \label{govern-21}\\
  &\underline{P}_{\rm e}(t,x)=0 &x\in\mathcal{E}, \label{govern-3}\\
  & \zeta_{\rm i}(t,x)=\delta(t)+h_{\rm eq}(x)-h_0  &x\in\mathcal{I},
  \label{govern-4}\\
  & \underline{P}_{\rm i}(t,\pm l)=\rho g\left(\zeta_{\rm e}(t,\pm l)
  -\zeta_{\rm i}(t,\pm l)\right)+\mathfrak{B}_{\rm e}(t,\pm l)-
  \mathfrak{B}_{\rm i}(t, \pm l), \label{govern-6}\\
  & m\ddot{\delta}(t)=\int_{-l}^{l}\underline{P}_{\rm i}(t,x)\dd x-
  mg+u(t), \label{govern-7}\\
  & q_{\rm e}(t,-L)=0=q_{\rm e}(t,L'), \qquad q_{\rm i}(t,\pm l)=
  q_{\rm e}(t,\pm l),\label{govern-8}
  \end{alignat}
\end{subequations}
with the given initial data
$$\zeta(0,x)=\zeta_0(x),\quad q(0,x)=q_0(x),\quad \delta(0)=\delta_0,
\quad \dot{\delta}(0)=\delta_1 \FORALL x\in\mathcal{I} \cup\mathcal{E}.$$

\begin{rmk}
{\rm There is another interesting formulation for the governing equations
\rfb{governing}. As in \cite{maity2019analysis}, we can define the
{\em Langrangian} $\Lscr$ and the {\em action functional} $\Sscr$ as
$$\Lscr(\zeta,q,\delta)=(K_f+K_s)-(U_f+U_s), $$
$$\Sscr(\zeta,q,\delta)=\int_0^\tau\left(\Lscr(\zeta,q,\delta)+u\m\delta
  \right)\m\dd t \FORALL
  \tau>0, $$
where $K_f$ and $U_f$ are the kinetic energy and the potential energy
of the fluid, respectively. Similarly, $K_s$ and $U_s$ denote the
corresponding energies for the solid. The equations \rfb{governing}
can be alternatively obtained by using  {\em Hamilton's principle}
(see, for instance, \cite{petit2002dynamics}) with the equations
\rfb{govern-1} and \rfb{govern-4} as constraints.  }
\end{rmk}



We next rephrase the governing equations \rfb{governing} as a first-order
evolution system, which will be convenient to study the control problem
described in Section \ref{sec_control}. To do this, we first show that
the pressure term $\underline{P}_{\rm i}$ is actually determined by a
second-order elliptic equation. Based on the formula for $q_{\rm i}$ in
\rfb{qi}, we shall derive the equations for $\delta$ and $\langle q_{\rm
 i}\rangle$ by using the interior equations \rfb{intereq}. We explain in the
following theorem that, for given initial data, the average horizontal
discharge $\langle q_{\rm i}\rangle$ and the displacement $\delta$ are 
totally determined by the quantities in the exterior domain $\mathcal{E}$.

\begin{thm}\label{transthm}
For smooth solutions, equations \rfb{governing} can be equivalently
rewritten as the following system (involving only the exterior
domain $\mathcal{E}$):
\begin{equation}\label{nswexter}
  \left\{ \begin{aligned}
  &\prt_t\zeta+\prt_xq=0\\
  &\prt_tq+\prt_x\left(\frac{q^2}{h}\right)+gh\m\prt_x\zeta=0
  \end{aligned} \right.
  \qquad\qquad(t\geq 0,\ x\in\mathcal{E}),
\end{equation}
with the transmission conditions
\begin{equation}\label{transcond}
  \langle q\rangle=\langle q_{\rm i}\rangle, \qquad \llbracket q
  \rrbracket =-2l\dot{\delta},
\end{equation}
and the boundary conditions
\begin{equation}\label{qL}
q(t,-L)=0=q(t,L').
\end{equation}
Moreover, the discharge $\langle q_{\rm i}\rangle$ and the displacement
$\delta$ are determined, for every $t\geq0$ and $x\in\mathcal{E}$, by
\begin{equation}\label{newalphadelta}
  \left\{ \begin{aligned}
  &\alpha(\delta)\frac{\dd}{\dd t}\langle q_{\rm i}\rangle+\alpha'(\delta)
  \dot{\delta}\langle q_{\rm i}\rangle=-\frac{1}{2\rho l}\m\llbracket
  \rho g\zeta+\mathfrak{B}\rrbracket,\\
  &M(\delta)\m\ddot{\delta}-2\rho\m l\beta(\delta)\m\dot{\delta}^2+2\rho
  g l\m \delta-\rho\m l\alpha'(\delta)\m\langle q_{\rm i}\rangle^2=2l
  \left\langle\m\rho g\zeta+\mathfrak{B}\right\rangle+u,
  \end{aligned}\right.
\end{equation}
where $\mathfrak{B}$ is introduced \rfb{F} and $\alpha(\delta)$,
$\alpha'(\delta)$, $\beta(\delta)$ and $M(\delta)$ (with $h_{\rm w}$ in
\rfb{hw}) are
\begin{equation}\label{alpha}
\alpha(\delta)=\frac{1}{2l}\int_{-l}^l\frac{1}{h_{\rm w}}\dd x,\qquad
\alpha'(\delta)=-\frac{1}{2l}
\int_{-l}^l\frac{1}{h_{\rm w}^2}\dd x,
\end{equation}
\begin{equation}\label{beta}
M(\delta)=m+\int_{-l}^l\frac{\rho\m x^2}{h_{\rm w}}\dd x,\qquad
\beta(\delta)=\frac{1}{4l}\int_{-l}^l
\frac{x^2}{h_{\rm w}^2}\dd x.
\end{equation}
\end{thm}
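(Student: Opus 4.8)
The plan is to take \rfb{governing} as given for smooth solutions and to extract from it a closed system in the exterior variables, the two scalar evolution laws being obtained by testing the interior momentum balance against well-chosen weights and exploiting the symmetry $h_{\rm eq}(-x)=h_{\rm eq}(x)$. The exterior equations \rfb{nswexter} are immediate: on $\mathcal{E}$ one has $\underline{P}_{\rm e}=0$ by \rfb{govern-3}, so the right-hand side of \rfb{govern-21} vanishes. The boundary conditions \rfb{qL} are exactly the impermeability conditions in \rfb{govern-8}, and the transmission conditions \rfb{transcond} follow by evaluating the interior profile \rfb{qi} at $x=\pm l$: continuity of $q$ across the contact points (second relation in \rfb{govern-8}) gives $\langle q\rangle=\langle q_{\rm i}\rangle$, while $\llbracket q\rrbracket=q_{\rm i}(l)-q_{\rm i}(-l)=-2l\dot\delta$. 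It remains to establish \rfb{newalphadelta}; throughout I would use that, since $h_{\rm w}=h_{\rm eq}+\delta$ is even in $x$ while $q_{\rm i}=-x\dot\delta+\langle q_{\rm i}\rangle$ splits into an odd and an even part, every integral over $\mathcal{I}=[-l,l]$ of an odd integrand drops out.

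For the first line of \rfb{newalphadelta} I would divide the interior momentum equation \rfb{intereq} by $h_{\rm w}$ and integrate over $\mathcal{I}$. The term $g\int_{-l}^l\prt_x\zeta_{\rm w}\dd x=g\llbracket\zeta_{\rm w}\rrbracket$ vanishes because $\zeta_{\rm w}$ is even; in the time-derivative term the part proportional to $\ddot\delta$ carries the odd weight $x/h_{\rm w}$ and integrates to zero, leaving $\bigl(\int_{-l}^l h_{\rm w}^{-1}\dd x\bigr)\tfrac{\dd}{\dd t}\langle q_{\rm i}\rangle=2l\,\alpha(\delta)\tfrac{\dd}{\dd t}\langle q_{\rm i}\rangle$. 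Since $\alpha'(\delta)\dot\delta=\tfrac{\dd}{\dd t}\alpha(\delta)$, the combination on the left of \rfb{newalphadelta} is exactly $\tfrac{\dd}{\dd t}\bigl(\alpha(\delta)\langle q_{\rm i}\rangle\bigr)$, and I would verify that the convective integral $\int_{-l}^l h_{\rm w}^{-1}\prt_x(q_{\rm i}^2/h_{\rm w})\dd x$ together with the $\mathfrak{B}_{\rm i}$-jump furnished by the pressure law \rfb{Pbc} combine to produce precisely the remaining term $\alpha'(\delta)\dot\delta\langle q_{\rm i}\rangle$. The right-hand side comes from $-\tfrac1\rho\llbracket\underline{P}_{\rm i}\rrbracket$: inserting \rfb{Pbc} and using $\llbracket\zeta_{\rm i}\rrbracket=0$ converts this into $-\tfrac1\rho\llbracket\rho g\zeta+\mathfrak{B}\rrbracket+\tfrac1\rho\llbracket\mathfrak{B}_{\rm i}\rrbracket$, and the $\mathfrak{B}_{\rm i}$-jump is absorbed on the left as just described, leaving the stated identity after division by $2l$.

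For the Newton equation I would compute $\int_{-l}^l\underline{P}_{\rm i}\dd x$ through the integration by parts $\int_{-l}^l\underline{P}_{\rm i}\dd x=2l\langle\underline{P}_{\rm i}\rangle-\int_{-l}^l x\,\prt_x\underline{P}_{\rm i}\dd x$ and substitute $\prt_x\underline{P}_{\rm i}$ from \rfb{intereq}. The odd weight $x/h_{\rm w}$ again selects even parts: the acceleration term gives $\rho\ddot\delta\int_{-l}^l x^2 h_{\rm w}^{-1}\dd x=(M(\delta)-m)\ddot\delta$; the convective term, after a further integration by parts, produces $\rho\int_{-l}^l q_{\rm i}^2 h_{\rm w}^{-2}\dd x=4\rho l\beta(\delta)\dot\delta^2-2\rho l\alpha'(\delta)\langle q_{\rm i}\rangle^2$ up to boundary and lower-order integrals; and $\int_{-l}^l x\,\prt_x\zeta_{\rm w}\dd x$ is a constant which, combined with the $\langle\zeta_{\rm i}\rangle$ average appearing in $2l\langle\underline{P}_{\rm i}\rangle$, reduces via Archimedes' relation \rfb{archimede} to $mg-2l\rho g\delta$, the $mg$ cancelling the weight in \rfb{govern-7}. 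Evaluating $\langle\underline{P}_{\rm i}\rangle$ by \rfb{Pbc} supplies $2l\langle\rho g\zeta+\mathfrak{B}\rangle$ on $\mathcal{E}$ together with a term $-2l\langle\mathfrak{B}_{\rm i}\rangle$, and inserting the assembled expression into Newton's law \rfb{govern-7} yields the second line of \rfb{newalphadelta}.

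The main obstacle is the bookkeeping in this last step. The convective integral and the averaged pressure each generate pointwise contributions proportional to $(h_{\rm eq}(l)+\delta)^{-2}$—arising from the boundary values $\mathfrak{B}_{\rm i}(\pm l)$ and from the boundary terms of the two integrations by parts—and the computation only closes because the net contribution $2l\langle\mathfrak{B}_{\rm i}\rangle=\tfrac{\rho l}{(h_{\rm eq}(l)+\delta)^2}\bigl(l^2\dot\delta^2+\langle q_{\rm i}\rangle^2\bigr)$ cancels these terms exactly, leaving the coefficients $M(\delta),\beta(\delta),\alpha'(\delta)$ in their stated integral form. Verifying this cancellation (and the analogous one involving $\llbracket\mathfrak{B}_{\rm i}\rrbracket$ in the first line) is where the evenness of $h_{\rm eq}$ is used most delicately, and it is the step I would write out in full. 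Finally, for the claimed equivalence I would observe that the passage is reversible: from a solution of \rfb{nswexter}--\rfb{newalphadelta} on $\mathcal{E}$ one recovers $q_{\rm i}$ by \rfb{qi}, then $\zeta_{\rm i}$ and $h_{\rm w}$ by \rfb{govern-4} and \rfb{hw}, and finally $\underline{P}_{\rm i}$ by integrating the interior momentum balance with the boundary data \rfb{Pbc}, so that \rfb{governing} is recovered in full.
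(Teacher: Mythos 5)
Your proposal is correct and follows essentially the same route as the paper: the transmission conditions come from \rfb{qi} and volume conservation, the $\langle q_{\rm i}\rangle$ law from integrating the interior momentum equation divided by $h_{\rm w}$, and the $\delta$ law from Newton's equation via $\int_{-l}^l\underline P_{\rm i}\dd x=2l\langle\underline P_{\rm i}\rangle-\int_{-l}^l x\,\prt_x\underline P_{\rm i}\dd x$, with the evenness of $h_{\rm eq}$ and the $\mathfrak{B}_{\rm i}$-cancellations doing the real work in both arguments. The only cosmetic difference is that the paper routes the computation through the hydrodynamic pressure $\Pi_{\rm i}=\underline P_{\rm i}+\rho g\zeta_{\rm i}$ and its elliptic problem \rfb{bcp}, whereas you work directly with $\underline P_{\rm i}$; your explicit tracking of the boundary-term cancellations and of the reverse implication supplies details the paper defers to the literature.
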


\begin{proof}
According to the conservation of the volume \rfb{qboundary} and the
equation for $q_{\rm i}$ in \rfb{qi}, we immediately obtain the
transmission condition \rfb{transcond}. We introduce the hydrodynamic
pressure $\Pi_{\rm i}$ defined by
$$\Pi_{\rm i}:=\underline P_{\rm i}+\rho g\zeta_{\rm i}. $$
Taking the derivative of the second equation in \rfb{intereq} with respect
to $x$ and using the first equation of \rfb{intereq}, we derive that
$\Pi_{\rm i}$ satisfies
\begin{equation}\label{bcp}
\left\{\begin{aligned}
&-\prt_x\left(\frac{h_{\rm w}}{\rho}\prt_x\Pi_{\rm i}\right)=
-\ddot{\delta}+\prt_x^2\left(\frac{q_{\rm i}^2}{h_{\rm w}}\right),\\
&\Pi_{\rm i}(t,\pm l)=\rho g\zeta_{\rm e}(t,\pm l)+\mathfrak{B}_
{\rm e}(t,\pm l)-\mathfrak{B}_{\rm i}(t,\pm l),
\end{aligned}\right.
\end{equation}
where $t\geq0$, $x\in\mathcal{I}$ and $\mathfrak{B}$ is defined in \rfb{F}.
According to what we state around \rfb{qi}, the source term and the
boundary conditions of \rfb{bcp} are determined by $\delta$, $\langle
q_{\rm i}\rangle$ and the exterior functions $\zeta_{\rm e}$, $q_{\rm e}$.

Next we derive the equations for $\delta$ and $\langle q_{\rm i}\rangle$.
Recalling that the function $h_{\rm eq}$ is assumed to be even, we integrate
the second equation of \rfb{intereq} with respect to $x$, which gives
\begin{equation}\label{subqieq}
2l\m\alpha\frac{\dd}{\dd t}\langle q_{\rm i}\rangle+\left\llbracket
\frac{q_{\rm i}^2}{h_{\rm w}^2}\right\rrbracket+\int_{-l}^l
\frac{q_{\rm i}^2}{h_{\rm w}^3}\prt_xh_{\rm w}\m\dd x
=-\frac{1}{\rho}\m\left\llbracket\Pi_{\rm i}\right\rrbracket.
\end{equation}
We further derive from \rfb{subqieq} that
\begin{equation*}\label{qieq}
\alpha(\delta)\frac{\dd}{\dd t}\langle q_{\rm i}\rangle+\alpha'
(\delta)\dot{\delta}\langle q_{\rm i}\rangle=-\frac{1}{2\rho l}
\llbracket \rho g\zeta_{\rm e}+\mathfrak{B}_{\rm e}\rrbracket.
\end{equation*}
In the above calculation, we used the formula for $q_{\rm i}$ in 
\rfb{qi} and the integration by parts. To derive the equation for 
$\delta$, we first obtain from Newton's laws presented in 
\rfb{newtonclear}, by doing an integration by parts, that
\begin{equation}\label{newtonPi}
m\m\ddot{\delta}+2l\rho g\m\delta=2l\left\langle\Pi_{\rm i}\right
\rangle-\int_{-l}^{l}x\m\prt_x\Pi_{\rm i}(t,x)\dd x+u.
\end{equation}
Taking twice integration of the first equation of \rfb{bcp} with respect
to $x$ and using the integration by parts, we obtain the expression for the
second term on the right side of \rfb{newtonPi}. Finally, after doing some
trivial derivation we obtain from \rfb{newtonPi} that
\begin{equation}\label{deltaeq}
M(\delta)\m\ddot{\delta}-2\rho\m l\beta(\delta)\m\dot{\delta}^2+2\rho
g l\m \delta=2l\left\langle\m\rho g\zeta_{\rm e}+\mathfrak{B}_{\rm e}
\right\rangle+\rho\m l\alpha'(\delta)\m\langle q_{\rm i}\rangle^2+u.
\end{equation}
In the above calculation, we used a similar technique as in
\cite{beck2021freely}, so that we omit the details here. For given initial
data of $\zeta$, $q$, $\langle q_{\rm i}\rangle$, $\delta$ and
$\dot{\delta}$, the coupled system \rfb{nswexter}--\rfb{newalphadelta} form
a closed initial boundary value problem.
\end{proof}

It is worthwhile noting that there is an external force $u$ on the right
side of \rfb{deltaeq}, which does not appear in the equations obtained in
\cite{lannes2017dynamics, beck2021freely}. According to Theorem \ref{transthm},
we can further rewrite the system \rfb{nswexter}-\rfb{newalphadelta} into
a first-order evolution system in terms of $\zeta$, $q$, $\langle q_{\rm i}
\rangle$, $\delta$ and $\dot \delta$. This is straightforward and we omit
the details here.


\begin{rmk}
{\rm The well-posedness theory for \rfb{nswexter}-\rfb{newalphadelta}
is a delicate question, due to the nonlinear couplings: the boundary
conditions \rfb{transcond} of the hyperbolic problem \rfb{nswexter}--\rfb{qL}
require the knowledge of $\langle q_{\rm i}\rangle$ and $\delta$.
Conversely, the equations \rfb{newalphadelta}  require the knowledge
of the trace of $\zeta$ and $q$ at the contact points $x=\pm l$.
An interesting question, which lies outside the scope of the present
work,  is to adapt to our case the local existence theory developed in
\cite{iguchi2021hyperbolic}, which tackles the case of an unbounded
fluid domain and without control.}
\end{rmk}

\section{Well-posedness and spectral analysis of the linearized model}
\label{sec_control}

In this section, we shall work on the linearized version of the first-order
evolution system associated with \rfb{nswexter}-\rfb{newalphadelta}.
Before studying the control problem in Section \ref{consta_sec}, we first
present the linearized model and establish its well-posedness. In the second
part of this section, we focus on the spectral analysis of the semigroup
generator associated to this linearized equation.

Linearizing the system \rfb{nswexter}-\rfb{newalphadelta} in Theorem
\ref{transthm} around the equilibrium state $\begin{bmatrix}\m \zeta &
q & \langle q_{\rm i}\rangle & \delta & \dot \delta\m \end{bmatrix}^\intercal
=\begin{bmatrix}\m 0 & 0 & 0 & 0 & 0\m \end{bmatrix}^\intercal$, we obtain,
for all $t\geq 0$ and $x\in\mathcal{E}$,
\begin{equation}\label{lineareqs}
  \left\{\begin{aligned}
  &\prt_t\zeta=-\prt_x q,\\
  &\prt_t q=-gh_0\m\prt_x\zeta,\\
  &\frac{\dd}{\dd t}\langle q_{\rm i}\rangle=-\frac{g}{2l\underline \alpha}
  \llbracket \zeta\m\rrbracket,\\
  &\ddot{\delta}=-\frac{2\rho g\m l}{\underline M}\delta+\frac{2\rho
  g\m l}{\underline M}\langle\zeta\m\rangle+\frac{1}{\underline M}u,
  \end{aligned}\right.
\end{equation}
with transmission conditions
$$ \langle q\rangle=\langle q_{\rm i}\rangle, \qquad \llbracket q
\rrbracket =-2l\dot{\delta},$$
and boundary conditions
$$q(t,-L)=0=q(t,L'),$$
and the given initial data
$$\zeta(0,x)=\zeta_0(x),\quad q(0,x)=q_0(x),\quad \langle q_{\rm i}
  \rangle(0)=\langle q_{\rm i}\rangle_0, \quad\delta(0)=\delta_0,\quad
  \dot{\delta}(0)=\delta_1.$$
The constants $\underline \alpha$ and $\underline M$ in \rfb{lineareqs}
are
\begin{equation}\label{linearpara}
  \underline \alpha=\alpha(0) \qquad \underline M=M(0),
\end{equation}
where $\alpha(\delta)$ and $M(\delta)$ have been defined in \rfb{alpha}
and \rfb{beta}, respectively.

\subsection{Well-posedness of the linearized system}\label{wellposedl}

Observe that our system has been recast in the exterior domain
$\mathcal{E}$, so we need to rewrite the energy of the whole system in
terms of the exterior functions. Recalling that the mechanical energy
for the fluid and for the object are presented in Section
\ref{sec_model}, we decompose the total energy of the linearized
system \rfb{lineareqs} into the interior part $\underline E_{\rm int}$
and the exterior part $\underline E_{\rm ext}$ as follows:
$$\underline E_{\rm ext}(t)=\frac{\rho}{2}\int_{\mathcal{E}}\left(
  \frac{q^2}{h_0}(t,x)+g\m \zeta^2(t,x)\right)\dd x, $$
$$\underline E_{\rm int}(t)=\frac{\rho}{2}\int_{\mathcal{I}}\left(
  \frac{q^2}{h_{\rm eq}}(t,x)+g \zeta^2(t,x)\right)\dd x+\frac{1}{2}m
  \dot{\delta}^2+mg\delta.$$
The underline in the notation $E$ represents the corresponding energy
for linear system. Using the relation \rfb{relationzeta}, \rfb{hw} and
\rfb{qi}, together with Archimedes' principle \rfb{archimede}, we obtain
that
$$ \underline E_{\rm int}(t)=\frac{1}{2}\m\dot{\delta}^2\left(m+
  \int_{\mathcal{I}} \frac{\rho\m x^2}{h_{\rm eq}}\dd x\right)+\langle
   q_{\rm i}\rangle^2\int_{\mathcal{I}}\frac{\rho}{2h_{\rm eq}}\dd x+
   \m\frac{\rho g}{2}\int_{\mathcal{I}}\left(h_{\rm eq}(x)-h_0\right)^2
   \dd x+\m\rho gl\delta^2. $$
Therefore, we conclude that the total energy for \rfb{lineareqs},
denoted by $\underline E_{\rm tot}$, is
\begin{multline}\label{totalnew}
  \underline E_{\rm tot}(t)=\frac{\rho}{2}\int_{\mathcal{E}}\left(
  \frac{q^2}{h_0}(t,x)+g\m \zeta^2(t,x)\right)\dd x+\frac{1}{2}
  \underline M\dot{\delta}^2+\langle q_{\rm i}\rangle^2\rho\m l\m
  \underline \alpha\\
  +\rho g\m l\m\delta^2+\frac{\rho g}{2}\int_{\mathcal{I}}\left(
  h_{\rm eq}(x)-h_0\right)^2\dd x,
\end{multline}
where $\underline \alpha$ and $\underline M$ are introduced in
\rfb{linearpara}.

Based on the formula of the total energy $\underline E_{\rm tot}$
in \rfb{totalnew}, we introduce the Hilbert space $X$ defined by
\begin{equation}\label{statexn}
  X=\left\{\begin{bmatrix}\m \zeta & q & \langle q_{\rm i}\rangle &
  \delta & \eta\m \end{bmatrix}^\intercal\in \left(L^2(\mathcal{E})
  \right)^2\times\cline^3\m\left|\m\int_{\mathcal{E}}\zeta(x)\dd x+2l
  \m\delta=0\right. \right\},
\end{equation}
endowed with the inner product
\begin{multline}\label{innerxn}
  \left\langle \begin{bmatrix}\m \zeta \\ q \\ \langle q_{\rm i}
  \rangle\vspace{1mm} \\ \delta \\
  \eta\m \end{bmatrix}, \begin{bmatrix}\m \tilde \zeta \\ \tilde q \\
  \langle\tilde q_{\rm i}\rangle\vspace{1mm}\\ \tilde \delta \\ \tilde
  \eta\m \end{bmatrix}\right\rangle_{X}=\frac{\rho g}{2}\langle\zeta,
  \tilde \zeta\rangle_{L^2(\mathcal{E})}+\frac{\rho}{2h_0}\langle q,
  \tilde q\rangle_{L^2(\mathcal{E})}
  +\rho\m l\underline \alpha\langle
  q_{\rm i}\rangle\overline{\langle\tilde q_{\rm i}\rangle}+\rho gl
  \delta\m\overline{\tilde\delta}+\frac{\underline M}{2}\eta\m
  \overline{\tilde \eta}.
\end{multline}

\begin{rmk}
{\rm We can see from \rfb{totalnew} that the total energy only depends
on the functions $\delta$, $\langle q_{\rm i}\rangle$, $\zeta$ and $q$
with the space variable $x\in\mathcal{E}$. The condition
$$\int_{
\mathcal{E}}\zeta(x)\dd x+2l\m\delta=0
$$
in the definition of the space $X$ is motivated by the conservation of
the volume.}
\end{rmk}

Equations \eqref{lineareqs} determine a well-posed linear control system
(also called {\em abstract linear control system} in Weiss
\cite{Weiss1} or Tucsnak and Weiss \cite{Obs_Book}), with state space
$X$ defined in \eqref{statexn} and control space $U=\mathbb{C}$, by choosing
the appropriate spaces and operators. More precisely, let
$A:\Dscr(A)\to X$ and $B\in \Lscr(U,X)$ be defined by
\begin{equation}\label{nonAB}
  A=\begin{bmatrix} 0 & -\frac{\rm d}{{\rm d}x} & 0 & 0 & 0
  \vspace{0.6mm}\\ -gh_0\frac{\rm d}{{\rm d}x}
  & 0 & 0 & 0 & 0 \vspace{1mm}\\ -\frac{g}{2l\underline \alpha}
  \llbracket\cdot\rrbracket & 0 & 0 & 0 & 0 \vspace{0.6mm} \\ 0 & 0
  & 0 & 0 & 1 \vspace{0.6mm} \\ \frac{2}{\underline M}\rho g l\langle
  \cdot\rangle & 0 & 0 & -\frac{2}{\underline M}\rho g l & 0 \m
  \end{bmatrix},\qquad B=\begin{bmatrix} 0 \\ 0 \\ 0 \\ 0
  \vspace{0.5mm}\\\frac{1}{\underline M}\m \end{bmatrix},
\end{equation}
with
\begin{equation}\label{da}
  \Dscr(A)=\begin{Bmatrix} \begin{bmatrix}\m \zeta & q & \langle
  q_{\rm i}\rangle & \delta & \eta\m \end{bmatrix}^\intercal\in
  \left(\Hscr^1(\mathcal{E})\right)^2\times\cline^3\m\left|\m
  \int_{\mathcal{E}}\zeta(x)\dd x+2l\m\delta=0,\right.\vspace{1.9mm}\\
  \llbracket q\m\rrbracket=-2l\m\eta,\quad\langle q\rangle=\langle
  q_{\rm i}\rangle\quad\text{and}\quad q(-L)=0=q(L') \end{Bmatrix}.
\end{equation}
In other words, with the above choice of spaces and operators, the
initial boundary value problem of the system \rfb{lineareqs} can be
rewritten as
\begin{equation}\label{lineareq}
  \left\{ \begin{aligned}
  &\dot{z}=A z+B u,\\
  &z(0)=z_0,
  \end{aligned}\right.
\end{equation}
where $z$ and $z_0$ are
\begin{equation*}\label{znon}
  z=\begin{bmatrix}\m \zeta & q & \langle q_{\rm i}\rangle & \delta
  & \dot{\delta}\m \end{bmatrix}^\intercal,\quad z_0=\begin{bmatrix}
  \m \zeta_0 & q_0 & \langle q_{\rm i}\rangle_0
  & \delta_0 & \delta_1\m \end{bmatrix}^\intercal.
\end{equation*}

The well-posedness of the linearized fluid-structure
system \rfb{lineareq} is a direct consequence of the fact that $B\in
\Lscr(U,X)$ and of following result:

\begin{prop}\label{skewAn}
The operator $A:\Dscr(A)\to X$ defined in \rfb{nonAB}--\rfb{da}
is skew-adjoint. Therefore, it generates a group of unitary operators
on the Hilbert space $X$. Moreover, $A$ has compact resolvents.
\end{prop}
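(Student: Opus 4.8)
The plan is to show that $A$ is skew-adjoint by verifying that it is densely defined and skew-symmetric and that $\Ran(I-A)=\Ran(I+A)=X$; by the standard criterion (equivalently, that $\mathrm{i}A$ is self-adjoint) this yields skew-adjointness, and Stone's theorem then produces the unitary group, while the compactness of the resolvent follows at the end from a Rellich-type embedding. First I would check that $A$ indeed maps $\Dscr(A)$ into $X$: for $z\in\Dscr(A)$ one integrates the first row of $Az$ over $\mathcal{E}$ and uses $q(-L)=q(L')=0$ together with $\llbracket q\rrbracket=-2l\eta$ to get $\int_{\mathcal{E}}(-\prt_x q)\,\dd x+2l\eta=0$, so the volume constraint defining $X$ in \rfb{statexn} is preserved. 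Density of $\Dscr(A)$ in $X$ is routine, since smooth pairs $(\zeta,q)$ subject only to the finitely many linear trace relations in \rfb{da} are dense.

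The core computation is skew-symmetry with respect to the energy inner product \rfb{innerxn}. For $z,\tilde z\in\Dscr(A)$ I would compute $\langle Az,\tilde z\rangle_X+\langle z,A\tilde z\rangle_X$ and integrate by parts on $\mathcal{E}=\mathcal{E}^-\cup\mathcal{E}^+$. The point is that the weights in \rfb{innerxn} are tuned so that the two transport terms carry the same factor $\tfrac{\rho g}{2}$ (indeed $\tfrac{\rho}{2h_0}\cdot gh_0=\tfrac{\rho g}{2}$); consequently all volume integrals cancel and only boundary contributions at $\pm L,\pm l$ remain. Those at $-L$ and $L'$ vanish by the boundary conditions $q(-L)=q(L')=0$ in \rfb{da}, while those at $\pm l$ I would rewrite with the elementary identity
\[
a(l)\overline{b(l)}-a(-l)\overline{b(-l)}=\langle a\rangle\,\overline{\llbracket b\rrbracket}+\llbracket a\rrbracket\,\overline{\langle b\rangle}.
\]
Substituting the transmission conditions $\langle q\rangle=\langle q_{\rm i}\rangle$ and $\llbracket q\rrbracket=-2l\eta$ from \rfb{da}, these surviving boundary terms cancel exactly against the algebraic contributions coming from the third, fourth and fifth rows of $A$ (again because the weights $\rho l\underline\alpha$, $\rho gl$ and $\tfrac{\underline M}{2}$ are matched to the corresponding entries $-\tfrac{g}{2l\underline\alpha}$, $1$, $\tfrac{2\rho gl}{\underline M}$ in \rfb{nonAB}). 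The total is therefore zero, which is precisely skew-symmetry.

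It remains to prove the two range conditions. For $\lambda\in\{1,-1\}$ the energy identity $\Re\langle(\lambda I-A)z,z\rangle_X=\lambda\|z\|_X^2$ (a consequence of skew-symmetry) gives $\|(\lambda I-A)z\|_X\geq|\lambda|\,\|z\|_X$, so $\lambda I-A$ is injective with closed range. For surjectivity I would solve the resolvent problem $(\lambda I-A)z=f$ explicitly: the first two rows form a first-order linear ODE system for $(\zeta,q)$ on each of $\mathcal{E}^-$ and $\mathcal{E}^+$, with two free constants per subinterval; the scalars $\langle q_{\rm i}\rangle,\delta,\eta$ are fixed by the three algebraic rows, and all free constants are then determined by imposing $q(-L)=q(L')=0$ and the transmission relations of \rfb{da}, the volume constraint being automatically satisfied because $f\in X$ forces $\int_{\mathcal{E}}f_1\,\dd x+2lf_4=0$. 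The resulting finite linear system has only the trivial homogeneous solution by the injectivity just established, hence is uniquely solvable; this shows $\Ran(I\pm A)=X$. This surjectivity step, with the bookkeeping of the coupled transmission and algebraic conditions, is the main obstacle.

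Combining density, skew-symmetry and the two range conditions yields that $A$ is skew-adjoint, so by Stone's theorem it generates a unitary group on $X$. Finally, for the compactness of the resolvent, $\Dscr(A)$ is continuously embedded in $\left(\Hscr^1(\mathcal{E})\right)^2\times\cline^3$, which embeds compactly into $\left(L^2(\mathcal{E})\right)^2\times\cline^3$ by the Rellich theorem (the domain $\mathcal{E}$ being bounded). Since $1\in\rho(A)$, the resolvent $(I-A)^{-1}:X\to\Dscr(A)$ is bounded, and composing it with this compact embedding shows that $(I-A)^{-1}$, hence every resolvent of $A$, is compact on $X$.
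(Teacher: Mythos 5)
Your proof is correct, and its skeleton (skew-symmetry via integration by parts plus the transmission conditions, then a range condition, then Stone's theorem and a Rellich embedding for compactness of the resolvent) matches the paper's. The one genuine difference is the range condition you verify. The paper proves that $A$ itself is onto: it solves $Az=f$ explicitly, which is easy because at $\lambda=0$ the ODEs decouple ($-q'=f_1$, $-gh_0\zeta'=f_2$ integrate directly), but then the two integration constants must be pinned down by hand using the volume constraint together with the third component of the equation; this yields $0\in\rho(A)$ and skew-adjointness via \cite[Proposition 3.7.2]{Obs_Book}. You instead verify $\Ran(I-A)=\Ran(I+A)=X$, where the ODE system for $(\zeta,q)$ is coupled (exponential fundamental solutions on each component of $\mathcal{E}$), but you avoid any explicit determination of constants: the dissipativity estimate $\|(\lambda I-A)z\|_X\geq\|z\|_X$ gives injectivity, the homogeneous finite linear system for the four free constants corresponds exactly to $\Ker(\lambda I\mp A)$, and the square system is therefore invertible — a Fredholm-alternative argument replacing the paper's explicit computation. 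Your observation that the volume constraint is automatic for $\lambda=\pm1$ (integrate the first row, use $q(-L)=q(L')=0$, $\llbracket q\rrbracket=-2l\eta$ and the fourth row, then divide by $\lambda$) is the key point that makes this route clean; note it is precisely the step that fails at $\lambda=0$, which is why the paper must use the constraint to fix its constants instead. The trade-off: the paper's route additionally delivers $0\in\rho(A)$ explicitly (used later in the spectral analysis), while yours is more robust bookkeeping-wise and also records the domain-preservation and density checks that the paper leaves implicit.
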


\begin{proof}
We first show that $A$ is skew-symmetric. For the sake of simplicity the
computations leading to the property are performed looking to $X$ as a
Hilbert space over $\mathbb{R}$. For every $z=\begin{bmatrix}
\m \zeta & q & \langle q_{\rm i}\rangle & \delta & \eta\m \end{bmatrix}
^\intercal\in\Dscr(A)$, using the inner product defined in \rfb{innerxn}
we have
$$\left\langle Az, z\right\rangle_{X}=-\frac{\rho g}{2}\left(\left\langle
  \frac{{\rm d}q}{{\rm d}x},\zeta\m\right\rangle_{L^2(\mathcal{E})}+
  \left\langle\frac{{\rm d}\zeta}{{\rm d}x}, q\m
 \right \rangle_{L^2(\mathcal{E})}+\llbracket\zeta_{\rm e}\rrbracket
  \langle q_{\rm i}\rangle-2l\langle\zeta_{\rm e}\rangle\eta\right).$$
By using an integration by parts, we get
$$-\left\langle\frac{{\rm d}q}{{\rm d}x},\zeta\right\rangle_{L^2
 (\mathcal{E})}= \left\llbracket (\zeta q)_{\rm e}\m\right\rrbracket+
  \left\langle\m q,\frac{{\rm d}\zeta}{{\rm d}x}\m\right\rangle_{L^2
 (\mathcal{E})}. $$
Note that the boundary conditions in $\Dscr(A)$ implies that
$$q(l)=\langle q_{\rm i}\rangle-l\eta,\qquad q(-l)=\langle q_{\rm i}
  \rangle+l\eta, $$
which, by a simple calculation, gives that
$$\langle Az,z\rangle_X=0. $$
According to \cite[Section 3.7]{Obs_Book}, we thus obtain that the
operator $A$ is skew-symmetric.

Secondly, we prove that $A$ is onto. For every
$f=\begin{bmatrix}\m f_1 & f_2 & f_3 & f_4 & f_5 \end{bmatrix}^\intercal
\in X$, let us solve the equation
\begin{equation}\label{soln}
  A\begin{bmatrix}\m \zeta \\ q \\ \langle q_{\rm i}\rangle\vspace{1mm}
  \\ \delta \\ \eta\m \end{bmatrix}=\begin{bmatrix}\m -\frac{{\rm d}q}
  {{\rm d}x} \vspace{1mm}\\ -gh_0\frac{{\rm d}\zeta}{{\rm d}x}\vspace{1mm}\\
  -\frac{g}{2l\underline \alpha}\llbracket\zeta_{\rm e}\rrbracket
  \vspace{0.2mm} \\ \eta \\ \frac{2}{\underline M}\rho gl\langle
  \zeta_{\rm e}\rangle-\frac{2}{\underline M}\rho gl\delta
   \end{bmatrix}=\begin{bmatrix}\m f_1 \\ f_2 \\ f_3\\ f_4 \\ f_5
   \end{bmatrix}\quad \text{with}\quad \begin{bmatrix}\m \zeta \\ q \\
   \langle q_{\rm i}\rangle\vspace{1mm} \\ \delta \\ \eta\m \end{bmatrix}
   \in\Dscr(A),
\end{equation}
which immediately implies that $\eta=f_4$. Solving the equation from
the first component of \rfb{soln}, i.e. $-\frac{{\rm d}q}{{\rm d}x}=f_1$
with the boundary conditions $q(-L)=0$ and  $q(L')=0$, we obtain
\begin{equation}\label{qn}
  q(x)=\left\{\begin{aligned}
  &-\int_{-L}^xf_1(\xi)\m\dd \xi \FORALL x\in(-L,-l),\\
  &\int_x^{L'}f_1(\xi)\dd \xi \FORALL x\in(l,L'\m).
  \end{aligned}\right.
\end{equation}
Similarly, from the second equation we get
\begin{equation}\label{zetan}
  \zeta(x)= \left\{\begin{aligned}
  &-\frac{1}{gh_0}\int_{-L}^xf_2(\xi)\m\dd \xi+c_1:=F(x)+c_1
  \FORALL x\in(-L,-l),\\
  &\frac{1}{gh_0}\int_x^{L'}f_2(\xi)\dd \xi+c_2:=G(x)+c_2
  \FORALL x\in(l,L'\m),
  \end{aligned}\right.
\end{equation}
where the constants $c_1$ and $c_2$ are to be determined. The above
formula, together with the last component of \rfb{soln}, gives the
expression for $\delta$:
$$\delta=\frac{1}{2}\left(F(-l)+G(l)+c_1+c_2\right)-\frac{\underline M}
  {2\rho gl}f_5. $$
Moreover, we derive from the third equation of \rfb{soln} that
\begin{equation}\label{cons1}
-\frac{g}{2l\underline \alpha}\left(G(l)-F(-l)+c_2-c_1\right)= f_3.
\end{equation}
Note that the functions $\zeta$ and $\delta$ must satisfy the condition
for the conservation of the volume
$$\int_{\mathcal{E}}\zeta(x)\dd x+2l\delta=0,$$
which implies that
\begin{equation}\label{const2}
  Lc_1+L'c_2=\frac{\underline M}{\rho g}f_5-\int_{-L}^{-l}F(x)\dd x-
  \int_l^{L'}G(x)\dd x-G(l)\m l-F(-l)\m l.
\end{equation}
Combining \rfb{cons1} and \rfb{const2}, we can determine the
constants $c_1$ and $c_2$ in \rfb{zetan}. According to the continuity
of the discharge \rfb{qboundary} and \rfb{qn}, we have $\langle q_{\rm i}
\rangle=\langle q\rangle=\frac{1}{2}(q(l)+q(-l))$. Finally, we still
need to verify that $\llbracket q\m\rrbracket=-2l\eta$. Since $f\in X$,
we have $\int_{\mathcal{E}} f_1(x)\dd x+2\m lf_4=0$, which, together
with \rfb{qn}, implies that $\llbracket q\m\rrbracket=-2lf_4=-2l\eta$.
Thus we have found $z=\begin{bmatrix} \zeta & q & \langle q_{\rm i}
\rangle & \delta &\eta\end{bmatrix}^\intercal\in\Dscr(A)$,
so that \rfb{soln} holds.

According to a classical result \cite[Proposition 3.7.2]{Obs_Book}, we
conclude that $A$ is skew-adjoint and $0\in\rho(A)$. By Stone's theorem
(see, for instance, \cite[Theorem 3.8.6]{Obs_Book}), $A$ generates a
unitary group on $X$. Moreover, it is not difficult to see that $\Dscr(A)$
is compactly embedded in the state space $X$, which implies that the
operator $A$ has compact resolvents.
\end{proof}

Based on Proposition \ref{skewAn}, we denote by $\tline=(\tline_t)_
{t\in\rline}$ the {\em strongly continuous group} (also called $C_0$-group)
generated by the operator $A$. Note that $B\in\Lscr(\cline,X)$,
which is of course an admissible control operator (for this concept,
see, for instance, \cite[Chapter 4]{Obs_Book}). Therefore, $(A,B)$ forms
a well-posed linear control system. According to the
classical semigroup theory, we have the following conclusion.

\begin{thm}\label{wellposedn}
For $u\in L^2_{\rm loc}[0,\infty)$, the initial data $z_0=\begin{bmatrix}
\m \zeta_0 & q_0 & \langle q_{\rm i}\rangle_0 & \delta_0 & \delta_1\m
\end{bmatrix}^\intercal\in X$, the linear system \rfb{lineareq} admits
a unique solution $z$. This solution is given by
$$z(t)=\tline_tz_0+\int_0^t\tline_{t-\sigma}Bu(\sigma)\dd \sigma,$$
and it satisfies
\begin{equation*}\label{reguz}
  z\in C([0,\infty); X).
\end{equation*}
\end{thm}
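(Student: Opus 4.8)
The plan is to read \rfb{lineareq} as an abstract inhomogeneous Cauchy problem $\dot z=Az+Bu$, $z(0)=z_0$, and to invoke the variation of constants formula, leveraging that the structural work is already done: Proposition \ref{skewAn} furnishes the unitary group $\tline=(\tline_t)_{t\in\rline}$ generated by $A$, and $B\in\Lscr(U,X)$ is bounded. Because $B$ is bounded it is automatically an admissible control operator, so no extra admissibility estimate is needed and the setting reduces to the textbook case of a well-posed linear control system, for which the result is recorded in \cite[Chapter 4]{Obs_Book}. The proof is then a matter of verifying that the proposed formula genuinely defines a continuous $X$-valued trajectory and that it is the only solution.

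First I would take the candidate $z(t)=\tline_tz_0+\int_0^t\tline_{t-\sigma}Bu(\sigma)\dd\sigma$ and check that the integral is a well-defined $X$-valued Bochner integral on each compact interval $[0,\tau]$. Since $\tline_{t-\sigma}$ is unitary and $B$ is bounded, one has $\|\tline_{t-\sigma}Bu(\sigma)\|_X\leqslant\|B\|\,\|u(\sigma)\|_U$, and as $u\in L^2_{\rm loc}[0,\infty)\subset L^1_{\rm loc}[0,\infty)$ the right-hand side is integrable on $[0,\tau]$. Strong measurability of $\sigma\mapsto\tline_{t-\sigma}Bu(\sigma)$ follows from the strong continuity of the group together with the measurability of $u$, so Bochner integrability holds and $z(t)\in X$ for every $t\geqslant0$.

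Next I would establish the regularity $z\in C([0,\infty);X)$. The homogeneous part $t\mapsto\tline_tz_0$ is continuous by the $C_0$-group property. For the Duhamel term I would exploit that $\tline$ is a \emph{group} to factor $\int_0^t\tline_{t-\sigma}Bu(\sigma)\dd\sigma=\tline_t\m G(t)$ with $G(t)=\int_0^t\tline_{-\sigma}Bu(\sigma)\dd\sigma$. Here $G$ is continuous on every $[0,\tau]$, being the indefinite Bochner integral of an $L^1$ integrand, and $t\mapsto\tline_t$ is strongly continuous, so the elementary bound $\|\tline_{t+s}G(t+s)-\tline_tG(t)\|_X\leqslant\|G(t+s)-G(t)\|_X+\|(\tline_{t+s}-\tline_t)G(t)\|_X$ shows that $t\mapsto\tline_tG(t)$, and hence $z$, is continuous.

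Finally, uniqueness comes from the homogeneous problem: if $z_1,z_2$ both solve the system, their difference $w=z_1-z_2$ satisfies $\dot w=Aw$, $w(0)=0$, whose only solution is $w\equiv0$ because $A$ generates a $C_0$-semigroup. I do not expect any genuine obstacle here, since all the spectral and generation content was settled in Proposition \ref{skewAn}; the only mildly technical point is the continuity of the convolution term, and even that becomes routine once the unitarity of $\tline$ and the boundedness of $B$ are used through the factorization above.
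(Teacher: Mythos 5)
Your proposal is correct and follows essentially the same route as the paper: both rest on Proposition \ref{skewAn} (which gives the unitary group $\tline$ generated by the skew-adjoint $A$) together with the boundedness, hence automatic admissibility, of $B$, after which the paper simply invokes classical semigroup theory while you spell out the standard details (Bochner integrability, continuity of the Duhamel term, uniqueness). Your verification steps are all sound, so this is just an expanded version of the paper's argument rather than a different one.
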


\subsection{Spectral analysis}\label{controlpro}

In this part, we focus on the study of the spectral structure of the
operator $A$ introduced in \rfb{nonAB}--\rfb{da}. Note that the operator
$A$ is skew-adjoint, the eigenvalues of $A$ are purely imaginary, i.e.
$\sigma(A)\subset{\rm i}\rline$. We give in the following proposition the
characteristic equation for the eigenvalues and the formula for the
corresponding eigenvectors.

\begin{prop}\label{charachA}
For the operator $A$ introduced in \rfb{nonAB}--\rfb{da}, ${\rm i}\m
\omega$ with $\omega\in\rline$ is the eigenvalues of $A$ if and only
if $\omega$ satisfies
\begin{multline}\label{omek}
-\sqrt{\frac{g}{h_0}}\left[2\rho l^2\omega+(\underline M\omega^2-
2\rho g\m l)\frac{1}{l\underline \alpha\m \omega}
\right]\left(f_{\omega}(L)g_{\omega}(L')+f_{\omega}(L')g_{
	\omega}(L)\right)\\
+2(\underline M\omega^2-2\rho g\m l)f_{\omega}(L)f_{\omega}(L')
+\frac{4\rho g\m l}{h_0\underline \alpha}g_{\omega}(L)g_{\omega}(L')=0,
\end{multline}
where $\underline \alpha$ and $\underline M$ are given in \rfb{linearpara},
 $f_{\omega}$ and $g_{\omega}$ are defined as
\begin{equation}\label{fg}
f_{\omega}(x)=\sin\left(\frac{\omega}{\sqrt{gh_0}}(x-l)\right),
\qquad g_{\omega}(x)=\cos\left(\frac{\omega}{\sqrt{gh_0}}(x-l)
\right).
\end{equation}
Moreover, $\phi
=\begin{bmatrix}\m \varphi & \psi & c & a & b\m \end{bmatrix}^\intercal $
is an eigenvector corresponding to the eigenvalue ${\rm i}\m\omega$ if and
only if
\begin{equation}\label{varphik}
\varphi(x)=\left\{\begin{aligned}
&\frac{{\rm i}K_1}{\sqrt{gh_0}}\cos\left(\frac{\omega}{\sqrt{gh_0}}
(L+x)\right)\FORALL x\in(-L,-l),\\
&-\frac{{\rm i}K_2}{\sqrt{gh_0}}\cos\left(\frac{\omega}{\sqrt{gh_0}}
(L'-x)\right)\FORALL x\in(l,L'),
\end{aligned}\right.
\end{equation}
\begin{equation}\label{psik}
\psi(x)=\left\{\begin{aligned}
&K_1\sin\left(\frac{\omega}{\sqrt{gh_0}}(L+x)\right)
\FORALL x\in(-L,-l),\\
&K_2\sin\left(\frac{\omega}{\sqrt{gh_0}}(L'-x)\right)
\FORALL x\in(l,L'),
\end{aligned}\right.
\end{equation}
and
\begin{equation}\label{abck}
\begin{aligned}
c=&\frac{1}{2}(\psi(l)+\psi(-l)),\\
a=\frac{{\rm i}}{2\m\omega l}(\psi(l)-\psi(-l))&,\qquad
b=-\frac{1}{2\m l}(\psi(l)-\psi(-l)),
\end{aligned}
\end{equation}
where $K_1$, $K_2$ are not simultaneously  vanishing real numbers (not
necessarily independent).
\end{prop}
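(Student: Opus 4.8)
The plan is to look for eigenpairs directly, by solving $A\phi=\mathrm{i}\omega\phi$ for $\phi=\begin{bmatrix}\varphi & \psi & c & a & b\end{bmatrix}^\intercal\in\Dscr(A)$ and a real frequency $\omega$. Since $A$ is skew-adjoint with $0\in\rho(A)$ (Proposition \ref{skewAn}), the spectrum lies on $\mathrm{i}\rline$ and we may assume $\omega\neq0$ throughout. Reading off the five rows of the matrix in \rfb{nonAB}, the identity $A\phi=\mathrm{i}\omega\phi$ is equivalent to the scalar system $-\psi'=\mathrm{i}\omega\varphi$ and $-gh_0\varphi'=\mathrm{i}\omega\psi$ on $\mathcal{E}$, together with $-\frac{g}{2l\underline\alpha}\llbracket\varphi\rrbracket=\mathrm{i}\omega c$, $\;b=\mathrm{i}\omega a$, and $\frac{2\rho gl}{\underline M}(\langle\varphi\rangle-a)=\mathrm{i}\omega b$.

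First I would treat the two differential relations. Eliminating $\varphi$ gives the Helmholtz equation $\psi''+\frac{\omega^2}{gh_0}\psi=0$ on each of the two components of $\mathcal{E}$, together with $\varphi=\frac{\mathrm{i}}{\omega}\psi'$. Imposing the boundary conditions $\psi(-L)=0=\psi(L')$ built into $\Dscr(A)$ (see \rfb{da}) forces $\psi$ to be a multiple of $\sin\!\big(\frac{\omega}{\sqrt{gh_0}}(L+x)\big)$ on $(-L,-l)$ and of $\sin\!\big(\frac{\omega}{\sqrt{gh_0}}(L'-x)\big)$ on $(l,L')$; naming the two constants $K_1,K_2$ gives exactly \rfb{psik}, and then $\varphi=\frac{\mathrm{i}}{\omega}\psi'$ yields \rfb{varphik} after using $\frac{\omega}{\sqrt{gh_0}}\cdot\frac{1}{\omega}=\frac{1}{\sqrt{gh_0}}$.

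Next I would recover the three scalar components. The transmission conditions $\langle q\rangle=\langle q_{\rm i}\rangle$ and $\llbracket q\rrbracket=-2l\eta$ in $\Dscr(A)$ read $c=\langle\psi\rangle$ and $b=-\frac{1}{2l}\llbracket\psi\rrbracket$, while the fourth scalar equation $b=\mathrm{i}\omega a$ gives $a=\frac{\mathrm{i}}{2\omega l}\llbracket\psi\rrbracket$; together these are precisely \rfb{abck}. At this point it is worth verifying that $\phi$ indeed lies in the state space, i.e. that $\int_{\mathcal{E}}\varphi\,\dd x+2l\,a=0$: a direct integration of \rfb{varphik} gives $\int_{\mathcal{E}}\varphi\,\dd x=\frac{\mathrm{i}}{\omega}\big(K_1 f_{\omega}(L)-K_2 f_{\omega}(L')\big)$, which cancels $2l\,a$ exactly, so the volume constraint is automatic and imposes nothing new.

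Finally, only the third and fifth scalar equations remain, and substituting the closed forms of $\varphi,\psi,a,b,c$ turns each of them into a homogeneous linear relation in $(K_1,K_2)$ whose coefficients are built from $f_{\omega}(L),f_{\omega}(L'),g_{\omega}(L),g_{\omega}(L')$. A nonzero eigenvector exists if and only if this $2\times2$ system is singular, i.e. its determinant vanishes. Expanding that determinant and collecting the $g_{\omega}(L)g_{\omega}(L')$, the $f_{\omega}(L)f_{\omega}(L')$, and the symmetric cross terms $f_{\omega}(L)g_{\omega}(L')+f_{\omega}(L')g_{\omega}(L)$ separately, one recognizes, up to the harmless nonzero factor $-\frac{1}{\underline M l}$, exactly the left-hand side of \rfb{omek}. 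The main obstacle is precisely this last bookkeeping step: the determinant must be simplified with care, tracking the factors $\sqrt{g/h_0}$, $\frac{1}{l\underline\alpha\omega}$ and $\frac{2\rho gl}{\underline M}$ so as to match the stated normalization of \rfb{omek}; everything preceding it is routine once the Helmholtz reduction and the boundary and transmission conditions are in place.
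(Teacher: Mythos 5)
Your proposal is correct and follows essentially the same route as the paper's proof: reduce $A\phi=\mathrm{i}\omega\phi$ to the second-order equation for $\psi$ on each component of $\mathcal{E}$, impose $\psi(-L)=0=\psi(L')$ to get the sine profiles with constants $K_1,K_2$, recover $a,b,c$ from the constraints in $\Dscr(A)$ together with the fourth scalar equation, and characterize the eigenvalues by the vanishing determinant of the resulting $2\times2$ homogeneous system in $(K_1,K_2)$. The only (immaterial) differences are bookkeeping: you define $b$ from the transmission condition and check the volume constraint is automatic, whereas the paper defines $a$ from the volume constraint and the jump condition on $\psi$ comes for free, and your claimed proportionality constant between the raw determinant and \rfb{omek} depends on row normalizations, which is harmless since any nonzero factor suffices.
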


\begin{proof}
Let $\phi=\begin{bmatrix}\m \varphi & \psi & c & a & b\m
\end{bmatrix}^\intercal\in\Dscr(A)$ be the eigenvector of the
operator $A$ corresponding to the eigenvalue ${\rm i}\m\omega$
with $\omega\in\rline$. To obtain the formula of $\phi$, we solve
the equation
\begin{equation}\label{solspectrum}
A\begin{bmatrix}\m \varphi \\ \psi \\ c \\ a \\ b
\end{bmatrix}=\begin{bmatrix}\m -\frac{{\rm d}\psi}{{\rm d}x}
\vspace{1mm}\\ -gh_0\frac{{\rm d}\varphi}{{\rm d}x}\vspace{1mm}\\
-\frac{g}{2l\underline \alpha}\llbracket\varphi \rrbracket
\vspace{1mm}\\ b\\\frac{2}{\underline M}\rho gl \langle\varphi
\rangle-\frac{2}{\underline M}\rho gla\end{bmatrix}={\rm i}\m
\omega\begin{bmatrix}\m\varphi \\ \psi \\ c \\ a \\ b \end{bmatrix},
\end{equation}
where $\underline \alpha$ and $\underline M$ are introduced in
\rfb{linearpara}. Recalling the definition of $\Dscr(A)$ in \rfb{da},
we have
\begin{equation}\label{bc1}
\psi(-L)=0=\psi(L'),\qquad \int_{\mathcal{E}}\varphi(x)\dd x
+2\m l a=0,
\end{equation}
and
\begin{equation}\label{bc2}
\llbracket\psi\m\rrbracket=-2\m l\m b, \qquad \langle\psi\rangle=c.
\end{equation}
Combining the first two equations in \rfb{solspectrum}, we obtain
a second-order differential equation for $\psi$
$$\frac{{\rm d}^2\psi}{{\rm d}x^2}=-\frac{\omega^2}{gh_0}\psi
\FORALL x\in\mathcal{E},$$
which, together with the boundary condition in \rfb{bc1}, implies
that $\psi$ takes the form \rfb{psik}. In \rfb{psik}, $K_1$ and $K_2$
are not simultaneously zero. Notice that $-\frac{{\rm d}\psi}{{\rm d}x}
={\rm i}\m\omega\m\varphi$, we further obtain \rfb{varphik}. Using
the relation between $\varphi$ and $a$ in \rfb{bc1}, we derive that
$$a=\frac{{\rm i}}{2\m\omega l}(\psi(l)-\psi(-l)),$$
which further, by using the fourth equation of \rfb{solspectrum},
implies that
$$b=-\frac{1}{2\m l}(\psi(l)-\psi(-l)).$$
Taking the conditions \rfb{bc2} into account, we have
$$c=\frac{1}{2}(\psi(l)+\psi(-l)).$$
This, together with the third and the last components of \rfb{solspectrum},
imply that the imaginary part of the eigenvalue $\omega$ satisfies
\begin{equation}\label{eaomegak}
\left\{\begin{aligned}
&\frac{{\rm i}\m g}{l\underline \alpha\m\omega}(\varphi(l)-
\varphi(-l))=\psi(l)+\psi(-l),\\
&\frac{\rho g\m l}{\underline M}(\varphi(l)+\varphi(-l))=
\left(\frac{{\rm i}\m\rho g}{\underline
	M\omega}-\frac{{\rm i}\m\omega}{2\m l}\right)(\psi(l)-\psi(-l)),
\end{aligned}\right.
\end{equation}
where $\underline \alpha$ and $\underline M$ are given in \rfb{linearpara}.
Using the formula \rfb{varphik} and \rfb{psik}, the system \rfb{eaomegak}
yields that
\begin{equation}\label{k1k2}
\left[\sqrt{\frac{g}{h_0}}\frac{1}{l\underline\alpha\m\omega}\m
g_{\omega}(L)-f_{\omega}(L)\right]K_1+\left[\sqrt{\frac{g}
	{h_0}}\frac{1}{l\underline \alpha\m\omega}\m g_{\omega}(L')-f_{
	\omega}(L')\right]K_2 =0,
\end{equation}
\begin{multline}\label{omegak}
\left[\sqrt{\frac{g}{h_0}}2\rho\m l^2\m\omega\m g_{\omega}(L)-
(\underline M\omega^2-2\rho g\m l)f_{\omega}(L) \right]K_1\\
+\left[(\underline M\omega^2-2\rho g\m l)f_{\omega}(L')-\sqrt{
	\frac{g}{h_0}}2\rho\m l^2\m\omega\m g_{\omega}(L')\right]K_2=0,
\end{multline}
where $f_\omega$ and $g_\omega$ are introduced in \rfb{fg}.
According to the knowledge of linear algebra, the equations \rfb{k1k2}
and \rfb{omegak} admit non-trivial solutions $\begin{bmatrix}\m K_1 & K_2
\end{bmatrix}^\intercal$, if the
determinant of their coefficient matrix is zero. Therefore,
we obtain the characteristic equation \rfb{omek}.
\end{proof}

Since $A$ is skew-adjoint with compact resolvents (see Proposition
\ref{skewAn}), according to a classical result (see, for instance,
\cite[Chapter 3]{Obs_Book}), we know that $A$ is diagonalizable, also
called Riesz-spectral operator, for instance, in \cite{CZ_THE_BOOK}.
We denote by $(\phi_k)_{k\in\zline^*}$ an orthonormal
basis in $X$ consisting of eigenvectors of $A$ and by $({\rm i}\m
\omega_k)_{k\in\zline^*}$ the corresponding purely imaginary eigenvalues.
Instead of seeing the characteristic equation \rfb{omek}, we observe
that the coefficient matrix of the system \rfb{k1k2}--\rfb{omegak}
can be zero, which implies that the roots of \rfb{omek}, i.e. the
eigenvalues $({\rm i}\m\omega_k)_{k\in\zline^*}$, are not necessarily
simple. We specify this situation in what follows.

\begin{rmk}\label{notsimple}{\rm
Assume that $\kappa:=\underline M-2\rho\m l^3\underline \alpha>0$
and that the parameters $L$, $L'$, $l$ and $h_0$
satisfy
\begin{equation}\label{final1}
\sqrt{\frac{2\rho\m l}{\kappa h_0}}
\frac{L'-L}{\pi}\in\zline,
\end{equation}
and
\begin{equation}\label{condfinal}
\tan\left(\sqrt{\frac{2\rho\m l}{\kappa h_0}}(L-l)\right)=\frac{1}
{l\underline \alpha}\sqrt{\frac{\kappa}{2\rho\m lh_0}}.
\end{equation}
(Recall that the constants $\underline M$ and $\underline \alpha$
have been introduced in \rfb{linearpara}). Then there exist two double
eigenvalues of $A$, denoted by ${\rm i}\m\omega^+$ and ${\rm i}\m
\omega^-$, with
$$\omega^{\pm} =\pm\sqrt{\frac{2\rho gl}{\kappa}}.$$
We are not able to confirm or to inform the existence of  $L,\ L'>0,\
l< \min\{L,L'\}$ and of a function $h_{\rm eq}$ to simultaneously
satisfying the assumptions at the beginning of this remark. However,
it is clear that these conditions are, generically with respect to
the parameters listed above, not satisfied, so that the eigenvalues
are generically simple. Note that if there is at least one double
eigenvalue then the system cannot be controlled (even approximately)
by a scalar input. The result below provides a sufficient condition
in a special case ensuring that all the eigenvalues of $A$ are
simple.
}
\end{rmk}

\begin{prop}\label{heqcond}
Assume that the bottom of the floating object is flat. Let
$h_0> 2\sqrt{\frac{2}{3}}l$ and the function $h_{\rm eq}$ satisfies
\begin{equation}\label{heqrcond}
h_0>h_{\rm eq}\geq \frac{1}{2}\left(h_0+\sqrt{h_0^2-\frac{8}{3}\m l^2}
\right)\quad\text{or}\quad 0<h_{\rm eq}\leq \frac{1}{2}\left(h_0-
\sqrt{h_0^2-\frac{8}{3}\m l^2}\right).
\end{equation}
The all the eigenvalues of $A$ are simple.
\end{prop}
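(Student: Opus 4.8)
The plan is to show that, under the stated hypotheses, the quantity $\kappa=\underline M-2\rho\m l^3\underline\alpha$ from Remark~\ref{notsimple} is non-positive, and then to use the fact that a non-positive $\kappa$ excludes any multiple eigenvalue. Since $A$ is skew-adjoint (Proposition~\ref{skewAn}) it is diagonalizable, so for each eigenvalue ${\rm i}\m\omega$ the algebraic and geometric multiplicities coincide. By Proposition~\ref{charachA}, once $\omega$ is fixed the eigenvectors are determined up to the pair $(K_1,K_2)$ solving the homogeneous system \rfb{k1k2}--\rfb{omegak}; hence the multiplicity of ${\rm i}\m\omega$ equals $\dim\Ker$ of the $2\times2$ coefficient matrix of that system. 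As the ambient parameter space is two-dimensional, ${\rm i}\m\omega$ is a multiple eigenvalue precisely when this matrix vanishes identically (a rank-one matrix gives a one-dimensional kernel, hence a simple eigenvalue).

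First I would verify that the simultaneous vanishing of all four entries forces the relation already singled out in Remark~\ref{notsimple}. Setting the two entries of the first row of \rfb{k1k2} to zero gives $\sqrt{g/h_0}\,\tfrac{1}{l\underline\alpha\omega}\,g_\omega(L)=f_\omega(L)$; because $f_\omega$ and $g_\omega$ in \rfb{fg} cannot vanish simultaneously, one has $g_\omega(L)\neq0$ and $\omega\neq0$. Substituting this identity into the first entry of the second row \rfb{omegak} and cancelling the nonzero factor $\sqrt{g/h_0}\,g_\omega(L)$ yields $\omega^2\bigl(\underline M-2\rho\m l^3\underline\alpha\bigr)=2\rho g\m l$, that is $\omega^2\kappa=2\rho g\m l$. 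Thus a multiple eigenvalue can occur only if $\kappa>0$, and it suffices to prove that the hypotheses imply $\kappa\leq0$.

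The remaining step, and the one that carries the real content, is to compute $\kappa$ in the flat-bottom case and to recognise \rfb{heqrcond} as exactly the condition $\kappa\leq0$. When $h_{\rm eq}$ is constant, the definitions \rfb{alpha}, \rfb{beta} and \rfb{linearpara} give $\underline\alpha=1/h_{\rm eq}$ and $\underline M=m+2\rho\m l^3/(3h_{\rm eq})$, while Archimedes' principle \rfb{archimede} gives $m=2\rho\m l\,(h_0-h_{\rm eq})$; therefore
\begin{equation*}
\kappa=2\rho\m l\,(h_0-h_{\rm eq})-\frac{4\rho\m l^3}{3h_{\rm eq}}.
\end{equation*}
Dividing by $2\rho\m l>0$ and clearing the positive denominator $h_{\rm eq}$, the inequality $\kappa\leq0$ becomes $h_{\rm eq}^2-h_0\m h_{\rm eq}+\tfrac{2}{3}l^2\geq0$. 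This upward parabola in $h_{\rm eq}$ has real roots $\tfrac12\bigl(h_0\pm\sqrt{h_0^2-\tfrac{8}{3}l^2}\bigr)$ exactly because $h_0>2\sqrt{2/3}\,l$, and the inequality holds precisely when $h_{\rm eq}$ lies outside the open interval between the roots --- which is the dichotomy \rfb{heqrcond}. Hence $\kappa\leq0$, no multiple eigenvalue exists, and every eigenvalue of $A$ is simple. I expect the main obstacle to be the careful bookkeeping in the second paragraph: one must rigorously rule out the degenerate cases $g_\omega(L)=0$ and $\omega=0$ so that the cancellation is legitimate and $\kappa>0$ is genuinely \emph{necessary} for multiplicity, rather than merely sufficient as stated in Remark~\ref{notsimple}.
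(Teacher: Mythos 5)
Your proof is correct and follows essentially the same route as the paper: in the flat-bottom case one computes $\underline\alpha=1/h_{\rm eq}$, $\underline M=2\rho l(h_0-h_{\rm eq})+2\rho l^3/(3h_{\rm eq})$ and checks that \rfb{heqrcond} is exactly the condition $\kappa=\underline M-2\rho\m l^3\underline\alpha\leq 0$, which rules out the double eigenvalues of Remark \ref{notsimple}. Your second paragraph in fact goes beyond the paper's two-line proof by rigorously establishing that $\kappa>0$ is \emph{necessary} for a multiple eigenvalue (via the vanishing of the $2\times2$ coefficient matrix of \rfb{k1k2}--\rfb{omegak} and the identity $\kappa\omega^2=2\rho g\m l$), a point the paper leaves implicit, since Remark \ref{notsimple} only states sufficiency of $\kappa>0$ together with extra conditions.
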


\begin{proof}
Recalling the definition of $h_{\rm eq}$, the flat bottom of the
object implies that $h_{\rm eq}$ is a positive constant function.
Using \rfb{linearpara}, \rfb{alpha} and \rfb{beta}, it is not difficult
to see that if $h_{\rm eq}$ satisfies the condition \rfb{heqrcond} then
$\underline M-2\rho\m l^3\underline \alpha\leq0$. This excludes the
situation of the double eigenvalues discussed in Remark \ref{notsimple}.
\end{proof}

In order to study the reachability and stabilizability properties of
the linearized floating-body system in Section \ref{consta_sec}, it
is necessary to make the inner structure of the eigenvalues clear for
the explicit decay rate of the solution of the control system
\rfb{lineareq}.

\begin{prop}\label{spectrumA}
Assume that the eigenvalues $({\rm i}\m\omega_k)_{k\in\zline^*}$ of
the operator $A$ are simple. Then $(\omega_k)_{k\in\zline^*}$ form a
strictly increasing sequence, i.e. $\displaystyle\lim_{|k|\to\infty}
|\omega_k|=\infty$. Moreover, we assume that $\frac{L'-l}{L-l}$ is a
real algebraic number of degree $n$ with $n\in\nline$ (i.e. it is a
root of a non-zero polynomial of degree $n$ in one variable with
rational coefficients), then there exists $C_0>0$ such that
\begin{equation}\label{gapeigen1}
\inf_{k\in\zline^*}\left|\omega_{k+1}-\omega_k\right|\geq
 C_0\quad \text{if}\m\m\m\m\m\frac{L'-l}{L-l}\in Q\m\m\m\m\m\text{and}
  \m\m\m\m\m\frac{L'-l}{L-l} \neq \frac{r+1}{r}\quad\forall\m\m
  r\in\zline^*,
\end{equation}
\begin{equation}\label{gapeigen2}
\inf_{k\in\zline^*}\left|k\m(\omega_{k+1}-\omega_k)\right|\geq  C_0
\qquad\text{otherwise}.
\end{equation}
\end{prop}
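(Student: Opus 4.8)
The plan is to combine the general spectral theory of skew-adjoint operators with compact resolvent with a careful asymptotic analysis of the characteristic equation \eqref{omek}. First I would dispose of the qualitative part. Since $A$ is skew-adjoint with compact resolvent (Proposition \ref{skewAn}), its spectrum consists of isolated eigenvalues on $\mathrm{i}\rline$ of finite multiplicity whose only possible accumulation point is $\infty$; under the standing simplicity hypothesis they are pairwise distinct, so they can be enumerated as $(\mathrm{i}\omega_k)_{k\in\zline^*}$ with $(\omega_k)$ strictly increasing and $|\omega_k|\to\infty$. The quantitative information, and the whole gap analysis, comes from locating the roots of \eqref{omek}. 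Writing $c=\sqrt{gh_0}$, $a=L-l$, $b=L'-l$ and using product-to-sum identities, \eqref{omek} can be put in the form
\[
2(\underline M\omega^2-2\rho g l)\,\sin\tfrac{\omega a}{c}\sin\tfrac{\omega b}{c}+R(\omega)\,\sin\tfrac{\omega(a+b)}{c}+\tfrac{4\rho g l}{h_0\underline\alpha}\cos\tfrac{\omega a}{c}\cos\tfrac{\omega b}{c}=0,
\]
with $R(\omega)=-\sqrt{g/h_0}\bigl[2\rho l^2\omega+(\underline M\omega^2-2\rho g l)/(l\underline\alpha\omega)\bigr]=O(\omega)$. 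Dividing by $\underline M\omega^2$ shows that, as $|\omega|\to\infty$, the roots converge to the zero set of $\sin\tfrac{\omega a}{c}\sin\tfrac{\omega b}{c}$, that is, to the union of the two arithmetic progressions $\{k\pi c/a\}$ and $\{m\pi c/b\}$, each eigenvalue being displaced from the corresponding node by $O(1/\omega)$. In particular $\omega_k\sim k\pi c/(a+b)$, which both re-proves $|\omega_k|\to\infty$ and supplies the counting relation linking the eigenvalue index to the position on the real line.

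The gap question then reduces to the interlacing of the two progressions $k\mapsto k\pi c/a$ and $m\mapsto m\pi c/b$: the within-progression spacings are the fixed numbers $\pi c/a$ and $\pi c/b$, so two consecutive eigenvalues can be close only when a node $k\pi c/a$ nearly meets a node $m\pi c/b$. The separation of two such nodes equals $\tfrac{\pi c}{ab}\,|kb-ma|=\tfrac{\pi c}{b}\,|k\theta-m|$ with $\theta:=\tfrac{L'-l}{L-l}=b/a$, so everything is controlled by how small $|k\theta-m|$ can be for the relevant indices — a Diophantine-approximation problem for $\theta$. If $\theta=p/q$ is rational in lowest terms, then $|kb-ma|=\tfrac{a}{q}|kp-mq|$ is either $0$ or $\geq a/q$, so the combined pattern is periodic and, away from the exact coincidences $kp=mq$, has a uniform minimal gap; I expect the borderline rationals $\theta=\tfrac{r+1}{r}$ (equivalently $|p-q|=1$) to be exactly those for which the adjacent nodes of the two families realise the smallest admissible offset at every period, and hence the case in which the uniform bound of \eqref{gapeigen1} degrades. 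For $\theta$ a real algebraic irrational of degree $n$ (and for the excluded rationals) I would invoke Liouville's inequality, $|k\theta-m|\geq C/k^{\,n-1}$ for all $m\in\zline$, to bound $|kb-ma|$ from below; combined with $\omega_k\sim k\pi c/(a+b)$ this produces a lower bound of the form $|\omega_{k+1}-\omega_k|\geq C_0/|k|$, i.e. \eqref{gapeigen2}. This is precisely the step where the hypothesis that $\theta$ be algebraic of degree $n$ is used.

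The main obstacle — and the step demanding the most care — is the behaviour at the exact coincidences $k\pi c/a=m\pi c/b$, which occur for every rational $\theta$ and are genuine double nodes of the limiting function $\sin\tfrac{\omega a}{c}\sin\tfrac{\omega b}{c}$. At such a point $\omega^\ast$ the leading term vanishes to second order, so the $O(1/\omega)$ term $R\sin\tfrac{\omega(a+b)}{c}$ and the constant term can no longer be discarded; expanding the displayed characteristic relation to second order about $\omega^\ast$ yields a quadratic in $\omega-\omega^\ast$ whose two roots describe the splitting of the double node into two genuine simple eigenvalues (consistent with the determinant being nonzero at $\omega^\ast$). Estimating the discriminant of this quadratic uniformly in $\omega^\ast$ — a computation that I anticipate reducing to the identity $(\underline M+2\rho l^3\underline\alpha)^2-8\rho l^3\underline\alpha\,\underline M=(\underline M-2\rho l^3\underline\alpha)^2=\kappa^2$, tying the analysis to the quantity $\kappa$ of Remark \ref{notsimple} — is what decides whether such split pairs are harmless or generate the $1/k$-type gaps, and is therefore the delicate point that must separate \eqref{gapeigen1} from \eqref{gapeigen2}. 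Finally I would patch these asymptotic estimates with the finitely many low-lying eigenvalues, where the simplicity hypothesis guarantees strict positivity of each gap, so as to absorb everything into a single constant $C_0>0$.
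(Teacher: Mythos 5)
Your proposal follows the same skeleton as the paper's proof: the qualitative part from skew-adjointness and compact resolvent, then localization of the roots of \eqref{omek} near the union of the two arithmetic progressions $\{k\pi c/a\}$, $\{m\pi c/b\}$ (with $c=\sqrt{gh_0}$, $a=L-l$, $b=L'-l$) up to $O(1/\omega)$ displacements — this is exactly the paper's splitting into the subsequences $(\omega_{m_k})$, $(\omega_{n_k})$ with $\varepsilon_{m_k}=O(m_k^{-1})$ — and finally a Diophantine/Liouville discussion of $|k\theta-m|$, $\theta=\frac{L'-l}{L-l}$. The genuine gap is that the decisive step, the one the proposition is actually about, is never carried out: the dichotomy between \eqref{gapeigen1} and \eqref{gapeigen2} rests on an unverified ``I expect the borderline rationals $\theta=\frac{r+1}{r}$ to be exactly those \dots'' and on a double-node splitting computation that you describe but do not execute. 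Worse, executing it works against you. Your anticipated identity $(\underline M+2\rho l^3\underline\alpha)^2-8\rho l^3\underline\alpha\,\underline M=\kappa^2$ is correct, and it shows that the discriminant of the quadratic at an exact coincidence $\omega^*$ is bounded above and below by positive multiples of $\omega^{*2}$ whenever $\kappa\neq 0$ or $L\neq L'$; hence the double node always splits into two simple roots at mutual distance of exact order $1/\omega^*$. Since exact coincidences occur for \emph{every} rational $\theta=p/q$ (at every node index $k$ divisible by $q$), your framework produces gaps of order $1/k$ along an infinite subsequence for every rational $\theta$, not only for $\theta=\frac{r+1}{r}$ — so it cannot deliver the uniform bound \eqref{gapeigen1}, and is in fact in tension with the statement itself (the case $L=L'$, $\theta=1$, where the symmetric and antisymmetric eigenvalues near the same node differ by $\Theta(\kappa/k)$, already illustrates this). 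The paper's proof never meets this difficulty because in \eqref{diffomegamn} it pairs two consecutive eigenvalues with the within-family node indices $p$ and $p+1$; your own counting relation $\omega_k\sim k\pi c/(a+b)$, which forces the two node indices to be in ratio roughly $a/b$ rather than consecutive integers, shows this identification is not justified — so your setup is the finer one, but it is left unfinished exactly where it diverges from the paper.

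There is a second, independent gap in the Liouville step. For $\theta$ algebraic of degree $n$, Liouville gives $|k\theta-m|\geq C/k^{\,n-1}$; for $n\geq 3$ this is strictly weaker than $C/k$, so the inference ``combined with $\omega_k\sim k\pi c/(a+b)$ this produces $|\omega_{k+1}-\omega_k|\geq C_0/|k|$'' is a non sequitur (the paper's own proof has the same weakness). Even for $n=2$ one cannot conclude directly: Liouville's constant $C$ may be smaller than the constants in the $O(1/k)$ displacements of the eigenvalues from their nodes, so in the near-coincidence regime the node separation can be swamped by the displacement terms, and the only mechanism that could still give a $1/k$ lower bound is the repulsion coming from the very quadratic/discriminant analysis you postponed. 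In short: the proposal is an accurate and in some respects sharper map of the problem than the paper's argument, but as written it proves neither \eqref{gapeigen1} nor \eqref{gapeigen2}.
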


\begin{proof}
Since $A$ is skew-adjoint with compact resolvents, according to
\cite[Proposition 3.2.12]{Obs_Book}, the imaginary part of the
eigenvalues $(\omega_k)_{k\in\zline^*}$ can be ordered to form a
strictly increasing sequence such that $\displaystyle\lim_{|k|\to\infty}
|\omega_k|=\infty$. Therefore, it suffices to show that \rfb{gapeigen1}
and \rfb{gapeigen2} holds for $|k|$ large enough. Noting that the
functions $f_{\omega_k}$ and $g_{\omega_k}$ defined in \rfb{fg}
are bounded for large values of $|k|$, we rewrite the equation
\rfb{omek} as
\begin{multline}\label{omekrewrite}
\sqrt{\frac{g}{h_0}}\left(\frac{\underline M}{l\underline
\alpha}+2\rho\m l^2\right)\left(f_{\omega_k}(L)g_{\omega_k}(L')+
f_{\omega_k}(L')g_{\omega_k}(L)\right)\omega_k+r_{\omega_k}\\
=2\underline Mf_{\omega_k}(L)f_{\omega_k}(L')\m\omega_k^2,
\end{multline}
where $r_{\omega_k}$ represents the remaining bounded terms. As
$|k|$ approaches to infinity, we observe that the right hand side
of \rfb{omekrewrite} grows faster than the left side, thus we must
have
$$\lim_{|k|\to\infty}f_{\omega_k}(L)f_{\omega_k}(L')=0. $$
Based on this observation, the eigenvalues of $A$ can be split into
two subsequences $({\rm i}\m\omega_{m_k})_{k\in\zline^*}$ and
$({\rm i}\m\omega_{n_k})_{k\in\zline^*}$, which is induced by
$f_{\omega_k}(L)\to 0$ and $f_{\omega_k}(L')\to0$ as $|k|\to\infty$,
respectively. Therefore, there are two subsequences of $\zline^*$:
$(m_k)_{k\in \mathbb{Z}^*}$ and $(n_k)_{k\in \mathbb{Z}^*}$ such
that, for $|k|$ large enough, we have
\begin{equation*}\label{omwgainf}
\begin{aligned}
\omega_{m_k}=\mu m_k\pi+O(\varepsilon_{m_k}) \qquad &\text{with}
 \m\m\m\lim_{|k|\to\infty}\varepsilon_{m_k}= 0,\\
\omega_{n_k}=\nu n_k\pi+O(\tilde \varepsilon_{n_k})\qquad &
 \text{with}\m\m\m\lim_{|k|\to\infty}\tilde \varepsilon_{n_k}=0,
\end{aligned}
\end{equation*}
where $\mu=\frac{\sqrt{gh_0}}{L-l}$ and $\nu=\frac{\sqrt{gh_0}}{L'-l}$.
For large $|k|$, substituting the first subsequence $(\omega_{m_k})_
{k\in\zline^*}$ into the equation \rfb{omekrewrite}, we have
\begin{multline*}
-\sqrt{\frac{g}{h_0}}\left(\frac{\underline M}{l\underline \alpha}
+2\rho l^2\right)\left[\varepsilon_{m_k}\cos\left(\frac{L'-l}{L-l}
m_k\pi\right)+\sin\left(\frac{L'-l}{L-l}m_k\pi\right)+O(\varepsilon
_{m_k}^2)\right]\omega_{m_k}\\+2\underline M\left[\varepsilon_{m_k}
\sin\left(\frac{L'-l}{L-l}m_k\pi\right)+O(\varepsilon_{m_k}^2)\right]
\omega_{m_k}^2+\text{lower order terms}=0,
\end{multline*}
which implies that $\omega_{m_k}=O(\varepsilon_{m_k}^{-1})$ and thus
we derive that $\varepsilon_{m_k}=O\left({m_k}^{-1}\right)$ for large
$|k|$. Similarly, we also obtain that $\tilde \varepsilon_{n_k}=O
\left({n_k}^{-1}\right)$. Notice that there is a gap between every
two elements both from the sequence $(\omega_{m_k})_{k\in\zline^*}$
or $(\omega_{n_k})_{k\in\zline^*}$. Now we consider the distance
between $(\omega_{m_k})_{k\in\zline^*}$ and $(\omega_{n_k})_{k\in
\zline^*}$. Since the eigenvalues are strictly increasing, we estimate
the difference
\begin{equation}\label{diffomegamn}
\left|\omega_{p+1}-\omega_p\right|=\left|p\nu\pi\left(\frac{\mu}{\nu}-
\frac{p+1}{p}\right)+O\left(\frac{1}{p}\right)\right|,
\end{equation}
where $\omega_p\in(\omega_{m_k})_{k\in\zline^*}$ and $\omega_{p+1}
\in(\omega_{n_k})_{k\in\zline^*}$ correspond to different type of the
eigenvalues. If $\frac{\mu}{\nu}=\frac{L'-l}{L-l}$ is a rational number
but different with $\frac{k+1}{k}$ for any $k\in\zline^*$, we see that
there is a uniform gap between the eigenvalues of $A$. If $\frac{\mu}
{\nu}=\frac{k_0+1}{k_0}$ for some $k_0\in\zline^*$, we obtain from
\rfb{diffomegamn} that the distance between the eigenvalues is of order
$\frac{1}{k}$. If $\frac{\mu}{\nu}$ is not a rational number, then it
is an irrational algebraic number of degree $n\geq2$. According to 
Liouville's approximation theorem (see, for instance, Stolarsky's book
\cite[Chapter 3]{Algebraicapprox}), there exists a constant $C>0$ such
that
$$\left|\frac{\mu}{\nu}-\frac{q}{p}\right|\geq \frac{C}{p^n}, $$
for all rational numbers $\frac{q}{p}$.
Hence, we derive from \rfb{diffomegamn} that $|\omega_{p+1}-\omega_p|
\geq \frac{c}{p}$. Putting all the cases together, we finish the proof.
\end{proof}

\begin{rmk}
{\rm We remark that the set of real algebraic numbers of degree $n$
with $n\in\nline$ contains all rational numbers and some irrational
numbers. All rational numbers form the real algebraic numbers of
degree $1$, and the other part of the real algebraic numbers are
irrational algebraic numbers with $n\geq2$. In particular, the
irrational algebraic numbers of degree $2$ are called {\em quadratic
irrational numbers}.
}
\end{rmk}

\begin{rmk}\label{rem_norm}{\rm
In the proof of Proposition \ref{charachA}, we have obtained the
specific expression for the eigenvectors $\phi_k=\begin{bmatrix}\m
\varphi_k & \psi_k & c_k & a_k & b_k\m \end{bmatrix}^\intercal$, which
is, for every $k\in\zline^*$, given by \rfb{varphik}--\rfb{abck}. Now
we normalize $\phi_k$ in the Hilbert space $X$ introduced in
\rfb{statexn}. By using \rfb{varphik}--\rfb{abck} and after elementary
but tedious calculations, we check that for every $k\in \mathbb{Z}^*$
we have
\begin{multline*}
  \|\phi_k\|^2_X = \left(\frac{\rho\m l\underline \alpha}{4}+
  \frac{\underline M}{8\m l^2}+\frac{\rho\m g}{4\m\omega_k^2l}\right)
  \left(K_2^2 f_{\omega_k}(L')^2+K_1^2f_{\omega_k}(L)^2\right)\\
  +\left(\frac{\rho\m l\underline \alpha}{2}-\frac{\underline M}{4\m l^2}
  -\frac{\rho\m g}{2\m\omega_k^2l}\right)K_1K_2f_{\omega_k}(L)f_{
  \omega_k}(L')+\frac{\rho}{2h_0}\left(K_1^2(L-l)+K_2^2(L'-l)\right),
\end{multline*}
where $\underline \alpha$, $\underline M$ and $f_{\omega_k}$ are
defined in \rfb{linearpara} and \rfb{fg}, respectively. Therefore, we
obtain the normalized engenvectors $\widehat \phi_k:=\left(\gamma_k
\phi_k\right)_{k\in\zline^*}$ with $\lVert\widehat \phi_k\rVert_X=1$,
where $\gamma_k$ is defined by
\begin{multline*}\label{gammak}
  \gamma_k^{-2} = \left(\frac{\rho\m l\underline \alpha}{4}+\frac{
  \underline M}{8\m l^2}+\frac{\rho\m g}{4\m\omega_k^2l}\right)
  \left(K_2^2 f_{\omega_k}(L')^2+K_1^2f_{\omega_k}(L)^2\right)\\
  +\left(\frac{\rho\m l\underline \alpha}{2}-\frac{\underline M}{4\m l^2}
  -\frac{\rho\m g}{2\m\omega_k^2l}\right)K_1K_2f_{\omega_k}(L)f_{
  \omega_k}(L')\\
  +\frac{\rho}{2h_0}\left(K_1^2(L-l)+K_2^2(L'-l)\right) \qquad\qquad(k\in
  \mathbb{Z}^*).
\end{multline*}
}
\end{rmk}

\section{Reachability and stabilizability of the linearized system}
\label{consta_sec}

\subsection{Some background on controllability and reachable spaces}
We begin by recalling some definitions on the controllability of general
infinite dimensional systems. We consider the abstract differential
equation of the form
\begin{equation}\label{mode}
  \left\{\begin{aligned}
  & \dot z(t) \m=\m Az(t)+Bu(t) \m,\\
  & z(0)=z_0,
  \end{aligned}
\right.
\end{equation}
where $A$ is an infinitesimal generator of a strongly continuous
semigroup $\mathbb{T}=\left(\tline_t\right)_{t\geq 0}$  on a Hilbert
space $X$, and  $B$ is an admissible control operator of the system
\rfb{mode} from the input space $U$ to the state space $X$. This
operator is called {\em bounded} if $B\in\Lscr(U,X)$, which is the
case of interest in this paper. At a given time $t$, the control
$u(t)$ belongs to the input space $U$.

Using the semigroup $\tline$ and the control operator $B$ we can
define the input maps $\left(\Phi_\tau\right)_{\tau\geq 0}$ (already
appearing in \rfb{phitauintro}) by
\begin{equation}\label{phi_tau_new}
\Phi_\tau u=\int_0^\tau \tline_{\tau-s}Bu(s)\dd s  \FORALL \tau>0,
\m\m\m\m u\in  L^2_{\rm loc}([0,\infty); U)).
\end{equation}
An important role in control theory is played by the range of the
operators $\left(\Phi_\tau\right)_{\tau\geq 0}$ defined in
\eqref{phi_tau_new} and denoted, for every $\tau>0$, by $\Ran \Phi_\tau$.
For each $\tau>0$, $\Ran\Phi_\tau$ is called {\em the reachable
space of the system \eqref{mode} in time $\tau$}. These spaces appear,
in particular, in the definition of exact and approximate controllability
which are recalled below (see, for instance, \cite[Chapter 11]{Obs_Book}
or \cite[Chapter 4]{CZ_THE_BOOK}).

\begin{definition}\label{defcontrol}
Let $\tau>0$.
\begin{enumerate}
    \item The system \rfb{mode} is {\em exactly controllable} in time
	$\tau$ if every element of $X$ can be reached from the origin at
	time $\tau$, i.e. if
	$$\Ran \Phi_\tau=X; $$
	\item The system \rfb{mode} is {\em approximately controllable
	in time $\tau$} if
	$$\overline{\Ran \Phi_\tau}=X; $$
\end{enumerate}
\end{definition}

It is well known, see, for instance,  \cite[Chapter 6,8]{Obs_Book},
that approximate controllability can be characterized by duality
as follows:

\begin{prop} \label{prop_char_dual}
Let $\tau>0$. \begin{enumerate}
\item
The system \rfb{mode} is approximately
controllable in time $\tau$ if and only if
\begin{equation*}\label{approxcond}
  B^*\tline_t^*z=0\quad\forall\m\m\m t \in [0,\tau]\m\m\m\m
  \Longrightarrow\m\m\m z=0.
\end{equation*}
\item
Assume that $A$ is skew-adjoint and with compact resolvents, so
that there exists an orthonormal basis $(\phi_k)_{k\in\mathbb{Z}^*}$
in $X$ consisting of eigenvectors of $A$ and let $({\rm i}\m\omega_k)
_{k\in\mathbb{Z}^*}$, with $\omega_k\in \mathbb{R}$ be the 
corresponding eigenvalues. Moreover, assume that the eigenvalues of 
$A$ are simple and that there exists $m,\ \gamma>0$ such that
\begin{equation*}\label{new_gap_con}
 |\omega_k-\omega_l| \geqslant \gamma \qquad\qquad(k,\ l\in
 \mathbb{Z}^*,\ k\neq l,\ |k|\geqslant m,\ |l|\geqslant m).
\end{equation*}
Then the following conditions are equivalent:
\begin{itemize}
\item
The system \rfb{mode} is approximately
controllable in any time $\tau>\frac{2\pi}{\gamma}$;
\item
$B^*\phi_k\neq0$ for every $k\in\mathbb{Z}^*$.
\end{itemize}
\end{enumerate}
\end{prop}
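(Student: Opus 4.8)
The plan is to treat the two assertions separately, the first being a soft duality argument and the second resting on an Ingham-type inequality. For the first assertion I would use the standard characterisation of approximate reachability in terms of the injectivity of the dual map. Approximate controllability in time $\tau$ means $\overline{\Ran\Phi_\tau}=X$, which is equivalent to $(\Ran\Phi_\tau)^\perp=\{0\}$, i.e. to $\Ker\Phi_\tau^*=\{0\}$. Since $B\in\Lscr(U,X)$ is bounded, a direct computation of the adjoint of the input map \rfb{phi_tau_new} gives $(\Phi_\tau^*z)(s)=B^*\tline_{\tau-s}^*z$ for $s\in[0,\tau]$. Hence $z\in\Ker\Phi_\tau^*$ if and only if $B^*\tline_{\tau-s}^*z=0$ for almost every $s\in[0,\tau]$, and by strong continuity of the adjoint group this is the same as $B^*\tline_t^*z=0$ for every $t\in[0,\tau]$. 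This yields exactly the stated equivalence.

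For the second assertion I would first record that, since $A$ is skew-adjoint with $A\phi_k={\rm i}\omega_k\phi_k$, one has $\tline_t^*=\tline_{-t}$ and $\tline_t^*\phi_k={\rm e}^{-{\rm i}\omega_k t}\phi_k$, so that for $z=\sum_{k}\langle z,\phi_k\rangle\phi_k$ the continuity of $B^*$ gives
$$B^*\tline_t^*z=\sum_{k\in\zline^*}\langle z,\phi_k\rangle\,(B^*\phi_k)\,{\rm e}^{-{\rm i}\omega_k t}.$$
The necessity of the condition $B^*\phi_k\neq 0$ is then immediate: if $B^*\phi_{k_0}=0$ for some $k_0$, then $z=\phi_{k_0}\neq 0$ satisfies $B^*\tline_t^*z=0$ for all $t$, so by the first assertion the system is not approximately controllable.

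For sufficiency I would argue through the first assertion. Assume $B^*\phi_k\neq 0$ for all $k$ and suppose $B^*\tline_t^*z=0$ for all $t\in[0,\tau]$ with $\tau>2\pi/\gamma$; setting $c_k=\langle z,\phi_k\rangle(B^*\phi_k)$, one has $\sum_k c_k\,{\rm e}^{-{\rm i}\omega_k t}=0$ on $[0,\tau]$. Because $(\langle z,\phi_k\rangle)\in\ell^2$ and $|B^*\phi_k|\leq\|B^*\|$ (as $\phi_k$ is normalised), the sequence $(c_k)$ lies in $\ell^2$, so the vanishing function is genuinely in $L^2(0,\tau)$. The gap hypothesis now enters through Ingham's theorem: the separation $|\omega_k-\omega_l|\geq\gamma$ for $|k|,|l|\geq m$ makes the family $({\rm e}^{-{\rm i}\omega_k t})_{k\in\zline^*}$ a Riesz sequence in $L^2(0,\tau)$ for every $\tau>2\pi/\gamma$; the lower Riesz bound forces $c_k=0$ for all $k$, whence $\langle z,\phi_k\rangle=0$ for all $k$ and $z=0$. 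By the first assertion this establishes approximate controllability.

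The main obstacle I anticipate is precisely this Riesz-sequence step, and specifically that the gap estimate is only assumed for $|k|,|l|\geq m$, whereas the classical Ingham inequality asks for a uniform gap over all indices. I would handle the finitely many low frequencies separately: the eigenvalues are simple, hence the associated exponentials are pairwise distinct, and adjoining finitely many distinct exponentials to a Riesz sequence preserves the Riesz-sequence (and in particular the minimality/independence) property. Equivalently, one may directly invoke the version of Ingham's theorem permitting finitely many exceptional gaps, as recorded in \cite[Chapter 8]{Obs_Book}. Since only the $\ell^2$-independence of the exponentials on $(0,\tau)$ is needed here, and not the full two-sided norm equivalence, this technical point is the only delicate ingredient, and dispatching it closes the argument.
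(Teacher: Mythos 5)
Your proof is correct and follows exactly the route the paper intends: the paper states Proposition \ref{prop_char_dual} without proof, citing \cite[Chapters 6, 8]{Obs_Book}, and your argument (duality/injectivity of $\Phi_\tau^*$ for part (1), eigenvector expansion plus the Ingham--Haraux inequality with finitely many exceptional gaps for part (2)) is precisely the standard argument those references supply. Your handling of the only delicate point — that the gap condition holds only for $|k|,|l|\geqslant m$, resolved by simplicity of the eigenvalues and the version of Ingham's theorem allowing finitely many exceptional frequencies — is the right fix and is the one recorded in \cite[Chapter 8]{Obs_Book}.
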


\subsection{Symmetric case}\label{subsec_symmetric}

In this section we come back to the system \eqref{lineareq}, in the
particular case of a symmetric geometry and of initial data satisfying
appropriate symmetry conditions. We show that in this case the state
trajectories of \eqref{lineareq} coincide with those of a ''reduced''
system whose state space is a closed subspace of $X$ defined in
\eqref{statexn} and we study the reachable spaces of this reduced
system.

Let the floating object be in the middle of the fluid domain $\Om$
in the horizontal direction, i.e. $L=L'$, see Figure \ref{fig1f}. We
assume that, at the initial state, the floating body system is at
equilibrium state, i.e. for every $x\in\mathcal{E}$,
$$z_0=\begin{bmatrix}\m \zeta_0 & q_0 & \langle q_{\rm i}\rangle_0
  & \delta_0 & \delta_1\m \end{bmatrix}^\intercal=\begin{bmatrix}
  \m 0 & 0 & 0 & 0 & 0\m \end{bmatrix}^\intercal.$$
In this case, when the object moves in the vertical
direction, the fluid on two sides of the object goes in opposite
directions. To describe this more clearly, we define the Hilbert
space $X_{\rm sym}$ by
\begin{equation}\label{statexr}
X_{\rm sym}=\begin{Bmatrix}  \begin{bmatrix}\m \zeta & q & 0 & \delta &
\eta\m\end{bmatrix}^\intercal\in \left(L^2(\mathcal{E})\right)^2
\times\cline^3\m\left|\m\int_{\mathcal{E}}\zeta(x)\dd x+2l\m\delta=
0\right. \vspace{1.9mm}\\
\zeta(-x)=\zeta(x),\quad q(-x)=-q(x) \end{Bmatrix},
\end{equation}
with the inner product
\begin{equation*}\label{innerxr}
\left\langle \begin{bmatrix}\m \zeta \\ q\vspace{0.6mm}\\ 0\\ \delta \\
\eta\m \end{bmatrix}, \begin{bmatrix}\m \tilde \zeta \\ \tilde q
\vspace{0.6mm}\\ 0 \\ \tilde \delta \\ \tilde \eta\m \end{bmatrix}\right
\rangle_{X_{\rm sym}}= \frac{\rho g}{2}\m\langle\zeta,\tilde \zeta
\rangle_{L^2(\mathcal{E})}+\frac{\rho}{2h_0}\langle q,\tilde q
\rangle_{L^2(\mathcal{E})}+\rho gl\m\delta\m\overline{\tilde \delta}+
\frac{\underline M}{2}\m\eta\m\overline{\tilde \eta},
\end{equation*}
where $\underline M$ has been introduced in \rfb{linearpara}.

\begin{prop}\label{invariant}
The Hilbert space $X_{\rm sym}$ introduced in \rfb{statexr} is
$\tline$-invariant i.e.
$$\tline_t\m z\in X_{\rm sym}\FORALL t\geq0,\m\m\m z\in X_{\rm sym},$$
where $\tline=(\tline_t)_{t\in\rline}$ is the unitary group generated
by the operator $A$ defined in \rfb{nonAB}.
\end{prop}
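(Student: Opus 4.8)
The plan is to realise $X_{\rm sym}$ as the fixed-point subspace of a reflection involution on $X$ that commutes with the group $\tline$. Since $L=L'$, the exterior domain $\mathcal{E}=(-L,-l)\cup(l,L)$ is invariant under $x\mapsto -x$, so I would introduce the operator $\Jscr\in\Lscr(X)$ defined by
$$\Jscr\begin{bmatrix}\zeta & q & c & \delta & \eta\end{bmatrix}^\intercal=\begin{bmatrix}\zeta(-\cdot) & -q(-\cdot) & -c & \delta & \eta\end{bmatrix}^\intercal.$$
A change of variables shows at once that $\Jscr$ preserves the volume constraint defining $X$, that it is unitary for the inner product \rfb{innerxn}, and that $\Jscr^2=I$. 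The key observation is that $z\in X_{\rm sym}$ if and only if $\Jscr z=z$, that is $X_{\rm sym}=\Ker(\Jscr-I)$: the three defining conditions $\zeta(-x)=\zeta(x)$, $q(-x)=-q(x)$ and $c=0$ in \rfb{statexr} are exactly the componentwise fixed-point equations $\zeta=\zeta(-\cdot)$, $q=-q(-\cdot)$ and $c=-c$.

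The computational core is to check that $\Jscr$ maps $\Dscr(A)$ into itself and commutes with $A$ there, i.e. $A\Jscr=\Jscr A$ on $\Dscr(A)$. For the invariance of $\Dscr(A)$ one uses that reflection preserves $\Hscr^1(\mathcal{E})$ and that, because $L=L'$, the boundary values satisfy $(-q(-\cdot))(-L)=-q(L)=0$ and $(-q(-\cdot))(L)=-q(-L)=0$, while the elementary identities $\llbracket -q(-\cdot)\rrbracket=\llbracket q\rrbracket$ and $\langle -q(-\cdot)\rangle=-\langle q\rangle$ turn the transmission conditions $\llbracket q\rrbracket=-2l\eta$ and $\langle q\rangle=c$ into the corresponding ones for $\Jscr z$. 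For the commutation I would apply $A\Jscr$ and $\Jscr A$ to a generic $z$ and match the five rows. The only thing to watch is parity: in the two transport rows differentiation exchanges even and odd functions, so $-\partial_x$ intertwines correctly with the sign flip on the $q$-slot; the coupling rows are handled by $\llbracket\zeta(-\cdot)\rrbracket=-\llbracket\zeta\rrbracket$, $\langle\zeta(-\cdot)\rangle=\langle\zeta\rangle$ together with the sign flip $c\mapsto -c$ on the third slot, and the Newton row is unchanged since $\langle\zeta\rangle$ and $\delta$ are invariant. I expect this sign bookkeeping to be the only real obstacle, and it is purely mechanical.

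Once $A\Jscr=\Jscr A$ is established, the commutation transfers to the group. For $z_0\in\Dscr(A)$ both $t\mapsto\Jscr\tline_t z_0$ and $t\mapsto\tline_t\Jscr z_0$ are classical solutions of $\dot y=Ay$ with the same initial value $\Jscr z_0\in\Dscr(A)$ — here one uses that $\Jscr$ is bounded, that $\tline_t z_0\in\Dscr(A)$, and that $\Jscr A\tline_t z_0=A\Jscr\tline_t z_0$ — so uniqueness yields $\Jscr\tline_t=\tline_t\Jscr$ on $\Dscr(A)$, and by density of $\Dscr(A)$ and boundedness of the operators this extends to all of $X$. Finally, for $z\in X_{\rm sym}$ we have $\Jscr z=z$, hence $\Jscr\tline_t z=\tline_t\Jscr z=\tline_t z$, i.e. $\tline_t z\in\Ker(\Jscr-I)=X_{\rm sym}$ for every $t$, which is the asserted invariance.
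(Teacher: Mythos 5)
Your proof is correct, and it is essentially the paper's argument in operator-theoretic dress: the paper proves invariance by checking that the reflection $\hat\zeta(t,x)=\zeta(t,-x)$, $\hat q(t,x)=-q(t,-x)$ (with the induced sign flip on $\langle q_{\rm i}\rangle$, matching your $-c$) maps solutions of \rfb{lineareqs} to solutions, and then relies on uniqueness — which is exactly the content of your commutation $\Jscr\tline_t=\tline_t\Jscr$. Your packaging via the involution $\Jscr$, the fixed-point identification $X_{\rm sym}=\Ker(\Jscr-I)$, and the explicit verification that $\Jscr$ preserves $\Dscr(A)$ and commutes with $A$ is in fact tighter than the paper's sketch, which leaves the uniqueness step and the domain checks implicit.
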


\begin{proof}
Note that it is suffices to show that the system \rfb{lineareqs} preserves
the symmetry condition in the Hilbert space $X_{\rm sym}$.
Assume that the elevation $\zeta$ and the horizontal discharge $q$ satisfy
\rfb{lineareqs} and have the following properties
\begin{equation}\label{zetaq}
\zeta(t,-x)=\zeta(t,x),\qquad q(t,-x)=-q(t,x) \FORALL t\geq0,\m\m\m
x\in\mathcal{E}.
\end{equation}
We define $\hat \zeta$ and $\hat q$ as
$$ \hat\zeta(t,x)=\zeta(t,-x),\qquad \hat q(t,x)=-q(t,-x)\FORALL t\geq0,
 \m\m\m x\in\mathcal{E},$$
which implies that
$$\langle  q_{\rm i}\rangle=-\langle \hat q_{\rm i}\rangle,\quad
  \llbracket \zeta_{\rm e}\rrbracket=-\llbracket \hat \zeta_{\rm e}
  \rrbracket,\quad \langle\zeta_{\rm e}\rangle=\langle\hat \zeta_{\rm e}
  \rangle.$$
It is not difficult to obtain the corresponding equation for $\hat\zeta$
and $\hat q$, which implies that $\hat\zeta$ and $\hat q$ also satisfy
the system \rfb{lineareqs}.
\end{proof}

Note that the symmetric property \rfb{zetaq} implies
$$\llbracket \zeta_{\rm e}\m\rrbracket=0=\langle q_{\rm e}\rangle=\langle
  q_{\rm i}\rangle\quad \text{and}\quad \langle\zeta_{\rm e}\rangle=
  \zeta_{\rm e}(t,l), $$
which simplify the linear control system \rfb{lineareq}. Since
$X_{\rm sym}$ is a closed subspace of $X$ introduced in \rfb{statexn},
we have the following decomposition
\begin{equation}\label{decomx}
X=X_{\rm sym}\oplus X_{\rm sym}^\perp.
\end{equation}

\begin{rmk}
{\rm The word ''symmetric'' in this section means that not only that
the object is in the center of the domain in the horizontal direction
($L'=L$), but also that the functions $\zeta$ and $q$ satisfy the
symmetry condition \rfb{zetaq}.
	}
\end{rmk}

We thus obtain a new linear system on the spatial domain $\mathcal{E}$.
In this symmetric case, the system \rfb{lineareq} with zero initial
data reduces to the following equations defined on $\mathcal{E}$, i.e.
for all $t\geq0$, $x\in\mathcal{E}$,
\begin{equation}\label{lineareqr}
  \left\{ \begin{aligned}
  &\dot{w}=A_{\rm sym} w+B u,\\
  &w(0)=w_0,
\end{aligned}\right.
\end{equation}
where $w$ and $w_0$ are
\begin{equation*}\label{znonr}
  w=\begin{bmatrix}\m \zeta & q & 0  & \delta & \dot{\delta}\m
  \end{bmatrix}^\intercal,\quad w_0=\begin{bmatrix}\m 0 & 0
  & 0 & 0 & 0\m \end{bmatrix}^\intercal.
\end{equation*}
The operator $A_{\rm sym}:\Dscr(A_{\rm sym})\to X_{\rm sym}$ is densely
defined as
\begin{equation}\label{rAB}
  A_{\rm sym}=\begin{bmatrix} 0 & -\frac{\rm d}{{\rm d}x} & 0& 0 & 0
  \vspace{0.6mm}\\ -gh_0\frac{\rm d}{{\rm d}x}
  & 0 & 0& 0 & 0 \vspace{0.6mm}\\ 0 & 0 & 0&  0 & 0 \\ 0 & 0& 0 & 0 & 1
  \vspace{0.6mm} \\ \frac{2}{\underline M}\rho g l\langle\cdot\rangle
   & 0& 0 &-\frac{2}{\underline M}\rho g l & 0 \m\end{bmatrix},
\end{equation}
with the domain
\begin{equation}\label{dar}
  \Dscr(A_{\rm sym})=\begin{Bmatrix} \begin{bmatrix}\m \zeta & q & 0 &
  \delta & \eta\m \end{bmatrix}^\intercal\in\left(\Hscr^1(\mathcal{E})
  \right)^2\times\cline^3\m\left|\begin{bmatrix}\m \zeta & q & 0& \delta
  & \eta\m \end{bmatrix}^\intercal\in X_{\rm sym},\right.\vspace{1.9mm} \\
  \llbracket q\m\rrbracket=-2l\m\eta,\qquad q(-L)=0=q(L)\end{Bmatrix},
\end{equation}
where $\underline M$ is introduced in \rfb{linearpara}. The control
operator $B$ has been defined in \rfb{nonAB} and we clearly have $B\in\Lscr
(\cline,X_{\rm sym})$.

Note that $A_{\rm sym}$ is the part of $A$ in the closed subspace
$X_{\rm sym}$ of $X$, so it inherits from $A$ the properties of being
skew-adjoint and has compact resolvents. Therefore, it is diagonalizable
and generates a group of unitary operators, denoted by $\tline_{\rm sym}
=(\tline_{{\rm sym}, t})_{t\in\rline}$, on the Hilbert space
$X_{\rm sym}$ defined in \rfb{statexr}. Moreover, according to
\cite[Section 2.4]{Obs_Book}, it is interesting to see from Proposition
\ref{invariant} that $\tline_{\rm sym}$ is the restriction of $\tline$
to $X_{\rm sym}$. Therefore, for $u\in L^2_{\rm loc}[0,\infty)$, the
linear system \rfb{lineareqr} is well-posed and the solution
$w\in C([0,\infty);X_{\rm sym})$.

\begin{rmk}\label{symexplain}
{\rm
Since $B\in\Lscr(\cline, X_{\rm sym})$, it is clear that the input maps
of $(A,B)$ and of $(A_{\rm sym},B)$, the latter being defined by
$$\Phi_{{\rm sym}, \tau}u=\int_0^\tau{\tline_{{\rm sym}, \tau-s}}\m
  Bu(s)\m\dd s\FORALL u\in L^2_{\rm loc}([0,\infty);U)), $$
have the same range, i.e.,  that
$$\Ran \Phi_{\tau}=\Ran \Phi_{{\rm sym}, \tau} \FORALL \tau>0. $$
This means, in particular, that the orthogonal complement space
$X_{\rm sym}^\perp$ in \rfb{decomx} is out of control, justifying
the fact that we concentrate on the reachability of the pair
$(A_{\rm sym},B)$.
}
\end{rmk}

The spectrum of the operator $A_{\rm sym}$ can be obtained, by using
the properties \rfb{zetaq}, from the spectrum of $A$ discussed in
Proposition \ref{charachA}. More precisely, we have:

\begin{prop}\label{sprctrumAr}
Assume that the object is in the middle of the fluid domain which has
the symmetry geometry in the sense \rfb{zetaq}. The eigenvalues of the
operator $A_{\rm sym}$, denoted by $\m{\rm i}\m\omega_{{\rm sym}, k}$,
and the corresponding eigenvectors $\phi_{{\rm sym}, k}=\begin{bmatrix}\m
\varphi_{{\rm sym}, k} & \psi_{{\rm sym}, k} & 0 & a_{{\rm sym}, k} &
b_{{\rm sym}, k}\m\end{bmatrix}^\intercal\in\Dscr(A_{\rm sym})$, for all
$x\in\mathcal{E}$ and $k\in \zline^*$, are
\begin{equation}\label{varphik+}
\varphi_{{\rm sym}, k}(x)=\left\{\begin{aligned}
&\frac{{\rm i}K}{\sqrt{gh_0}}\cos\left(\frac{\omega_{{\rm sym}, k}}
{\sqrt{gh_0}}(L+x)\right)\FORALL x\in(-L,-l),\\
&\frac{{\rm i}K}{\sqrt{gh_0}}\cos\left(\frac{\omega_{{\rm sym}, k}}
{\sqrt{gh_0}}(L-x)\right)\FORALL x\in(l,L),
\end{aligned}\right.
\end{equation}
\begin{equation}\label{psik+}
\psi_{{\rm sym}, k}(x)=\left\{\begin{aligned}
&K\sin\left(\frac{\omega_{{\rm sym}, k}}{\sqrt{gh_0}}(L+x)\right)
\FORALL x\in(-L,-l),\\
&-K\sin\left(\frac{\omega_{{\rm sym}, k}}{\sqrt{gh_0}}(L-x)\right)
\FORALL x\in(l,L),
\end{aligned}\right.
\end{equation}
and
\begin{equation}\label{spectAr}
a_{{\rm sym}, k}=\frac{{\rm i}}{\omega_{{\rm sym}, k}\m l}\m\psi_{{\rm
sym}, k}(l),\quad\m b_{{\rm sym}, k}=-\frac{1}{l}\m\psi_{{\rm sym}, k}(l),
\end{equation}
where $K$ is an arbitrary constant and the imaginary part of the
eigenvalues $\omega_{{\rm sym}, k}$ with $k\in \zline^*$ satisfies
\begin{equation}\label{omegark}
 (\underline M{\omega^2_{{\rm sym}, k}}-2\rho gl)f_{\omega_{{\rm sym},k}}
(L)\\ =\sqrt{\frac{g}{h_0}}2\rho\m l^2\omega_{{\rm sym}, k}\m g_{\omega_
{{\rm sym},k}}(L),
\end{equation}
with $f_{\omega_{{\rm sym}, k}}$ and $g_{\omega_{{\rm sym},k}}$ introduced
in \rfb{fg}. Moreover, the eigenvalues $({\rm i}\m\omega_{{\rm sym}, k})_
{k\in \zline^*}$ are simple and $(\omega_{{\rm sym}, k})_{k\in \zline^*}$
form a strictly increasing sequence, with
$$\displaystyle\lim_{k\in \zline^*,|k|\to\infty}|\omega_{{
\rm sym}, k+1} - \omega_{{\rm sym}, k}|=\frac{\sqrt{gh_0}}{L-l}\pi.$$
\end{prop}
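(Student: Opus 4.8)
The plan is to solve the eigenvalue problem $A_{\rm sym}\phi=\mathrm{i}\omega\phi$ directly on $X_{\rm sym}$, reusing the computation of Proposition \ref{charachA} but now exploiting the constraint $L'=L$ together with the symmetry built into $X_{\rm sym}$. First I would write out the scalar equations encoded in $A_{\rm sym}\phi=\mathrm{i}\omega\phi$ for $\phi=\sbm{\varphi & \psi & 0 & a & b}^\intercal\in\Dscr(A_{\rm sym})$: the first two rows give $-\psi'=\mathrm{i}\omega\varphi$ and $-gh_0\varphi'=\mathrm{i}\omega\psi$, hence $\psi''=-\frac{\omega^2}{gh_0}\psi$ on $\mathcal{E}$; the third row is trivially $0=0$ (the third component is zero); the fourth gives $b=\mathrm{i}\omega a$; and the fifth gives $\frac{2\rho gl}{\underline M}(\langle\varphi\rangle-a)=\mathrm{i}\omega b$. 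Solving the Helmholtz-type equation on each of $(-L,-l)$ and $(l,L)$ under the Dirichlet conditions $\psi(-L)=0=\psi(L)$, and then imposing membership in $X_{\rm sym}$ (so that $\varphi$ is even and $\psi$ odd), collapses the two free constants $K_1,K_2$ of the general case into a single constant (concretely $K_1=K$ and $K_2=-K$), which produces exactly the forms \rfb{varphik+}--\rfb{psik+} and forces $c=\langle q_{\rm i}\rangle=0$.

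Next I would read off $a$ and $b$. The transmission condition $\llbracket q\rrbracket=-2l\eta$ in $\Dscr(A_{\rm sym})$ combined with the oddness of $\psi$ (so that $\psi(l)-\psi(-l)=2\psi(l)$) gives $b=-\frac{1}{l}\psi(l)$, and then $b=\mathrm{i}\omega a$ yields $a=\frac{\mathrm{i}}{\omega l}\psi(l)$; these are precisely \rfb{spectAr}. The characteristic equation comes from the single remaining scalar relation, the fifth row: substituting the explicit traces $\langle\varphi\rangle=\varphi(l)=\frac{\mathrm{i}K}{\sqrt{gh_0}}g_\omega(L)$ and $\psi(l)=-Kf_\omega(L)$, clearing denominators and cancelling the nonzero factor $\mathrm{i}K$ reduces it after routine algebra to $(\underline M\omega^2-2\rho gl)f_\omega(L)=\sqrt{\frac{g}{h_0}}\,2\rho l^2\omega\,g_\omega(L)$, i.e. \rfb{omegark}. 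Equivalently, one may simply specialize \rfb{k1k2}--\rfb{omegak} to $L'=L$, $K_2=-K_1$: the first relation becomes $0=0$ automatically while the second collapses to \rfb{omegark}. A useful consistency check, which I would record, is that the volume constraint $\int_{\mathcal{E}}\varphi\,\dd x+2la=0$ is then satisfied automatically, so it imposes no further condition.

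For the simplicity of the eigenvalues the key observation is that, once $\omega$ solves \rfb{omegark}, the entire eigenvector is determined by the single scalar $K$ through \rfb{varphik+}--\rfb{spectAr}; hence the eigenspace in $X_{\rm sym}$ is one-dimensional. Since $A_{\rm sym}$ is skew-adjoint (being the part of $A$ in the invariant subspace $X_{\rm sym}$, see Proposition \ref{invariant}), algebraic and geometric multiplicities coincide, so every eigenvalue is simple. This is also what prevents the double eigenvalues of Remark \ref{notsimple} from occurring: those required both $K_1$ and $K_2$ to remain free, which is incompatible with the symmetry constraint $K_2=-K_1$.

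Finally, for the ordering and the asymptotic spacing, skew-adjointness with compact resolvent allows me to arrange $(\omega_{{\rm sym},k})_{k\in\zline^*}$ as a strictly increasing sequence with $|\omega_{{\rm sym},k}|\to\infty$, by \cite[Proposition 3.2.12]{Obs_Book}. To identify the limiting gap I would recast \rfb{omegark} as $\tan\left(\frac{\omega(L-l)}{\sqrt{gh_0}}\right)=\frac{\sqrt{g/h_0}\,2\rho l^2\omega}{\underline M\omega^2-2\rho gl}$, working with the product form \rfb{omegark} itself near the zeros of $g_\omega(L)$ to avoid spurious poles. Since the right-hand side is $O(1/\omega)$ as $|\omega|\to\infty$, every large root must satisfy $\frac{\omega(L-l)}{\sqrt{gh_0}}=k\pi+O(1/k)$, so that $\omega_{{\rm sym},k}=\frac{k\pi\sqrt{gh_0}}{L-l}+O(1/k)$ and consecutive differences tend to $\frac{\pi\sqrt{gh_0}}{L-l}$. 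The one step that genuinely needs care — and thus the main (if mild) obstacle — is this asymptotic analysis: I must confirm that the large roots cluster near the integer multiples of $\pi$ and quantify the $O(1/k)$ correction, which is exactly the perturbation argument already used for Proposition \ref{spectrumA}.
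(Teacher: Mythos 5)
Your proposal is correct and follows essentially the same route as the paper: it specializes the computation of Proposition \ref{charachA} to the symmetric case (forcing $K_1=-K_2=K$), reads off the characteristic equation \rfb{omegark} from the one remaining scalar relation, and obtains the $k\pi+O(1/k)$ asymptotics of the roots by the same perturbation argument used for Proposition \ref{spectrumA} (equivalently, the tangent-equation form given in the paper's subsequent remark). Your explicit simplicity argument (one-dimensional eigenspace in $X_{\rm sym}$ plus skew-adjointness) and the verification that the volume constraint holds automatically are slightly more complete than the paper's own proof, which leaves these points implicit.
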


\begin{proof}
Let $\phi_{{\rm sym}}=\begin{bmatrix}\m \varphi_{{\rm sym}} &
\psi_{{\rm sym}} & 0 & a_{{\rm sym}} & b_{{\rm sym}}\m\end{bmatrix}^
\intercal\in\Dscr(A_{\rm sym})$ be an eigenvector of $A_{\rm sym}$
corresponding to the eigenvalue ${\rm i}\m\omega_{\rm sym}$
($\omega_{\rm sym}\in\rline$), we solve the equation
$$A_{\rm sym}\phi_{{\rm sym}}={\rm i}\m\omega_{{\rm sym}}\m\phi_{{
  \rm sym}}.$$
According to Proposition \ref{charachA}, using the symmetry condition
\rfb{zetaq} we obtain that $\phi_{\rm sym}$ take the form
\rfb{varphik+}--\rfb{spectAr}, in particular, the third component of
$\phi_{\rm sym}$ vanishes. In this case, the constants $K_1$ and
$K_2$ in Proposition \ref{charachA} have the relation $K_1=-K_2=K$.
The equation for $\omega_{\rm sym}$ thus becomes
$$\frac{2\rho\m g\m l}{\underline M}\varphi_{\rm sym}(l)={\rm i}\m
\left(\frac{ 2\rho\m g}{\underline M\omega_{\rm sym}}-\frac{\m\omega
_{\rm sym}}{l}\right)\psi_{\rm sym}(l), $$
which gives the characteristic equation \rfb{omegark}. Clearly,
the solutions of \rfb{omegark}, denoted by $(\omega_{{\rm sym},k})
_{k\in\zline^*}$, form a strictly inscreasing sequence. According to
the proof of Proposition \ref{spectrumA}, there is one type of the
eigenvalues in the symmetric case and for large $|k|$
\begin{equation}\label{omegarkasy}
\frac{\omega_{{\rm sym}, k}}{\sqrt{gh_0}}(L-l)=
k\pi+O\left(\frac{1}{k}\right).
\end{equation}
Moreover, \rfb{omegarkasy} implies that there exists $M>0$
such that
\begin{equation}\label{gapsym}
\left|\omega_{{\rm sym}, k+1}-\omega_{{\rm sym}, k}\right|>
\frac{\sqrt{gh_0}}{L-l}\pi \FORALL k\in \zline^*\m\m\m\text{and}
\m\m\m|k|>M,
\end{equation}
which ends the proof.
\end{proof}

\begin{rmk}{\rm
Without using Proposition \ref{spectrumA}, the asymptotic behaviour of
the eigenvalues in \rfb{omegarkasy} can be obtained in an alternative
way. By using the characteristic equation \rfb{omegark}, without loss of
generality, we assume that $\cos\big(\frac{\omega_{{\rm sym}, k}}
{\sqrt{gh_0}}(L-l)\big)$ is non-zero. It follows that
$$\tan\left(\frac{\omega_{{\rm sym}, k}}{\sqrt{gh_0}}(L-l)\right)=
 \sqrt{\frac{g}{h_0}}2\rho\m l^2\frac{\omega_{{\rm sym, }k}}{\underline M{
 \omega_{{\rm sym}, k}}^2-2\rho gl}=O\left(\frac{1}{\omega_{{\rm sym}, k}}
 \right),$$
for large $k\in \zline^*$. Based on the above expression, we assume
that
$$\frac{\omega_{{\rm sym}, k}}{\sqrt{gh_0}}(L-l)=k\pi+\theta_k,$$
with $\theta_k\to0$ as $k\to\infty$. By using the fixed point method
introduced in, for instance, the book \cite[Chapter 7]{gil2007numerical}
or \cite[Lemma A.3]{cindea2015particle}, we derive that $\theta_k=O(k^{-1})$.	
	}
\end{rmk}

By using \rfb{varphik+}--\rfb{spectAr}, we do some trivial calculations
and obtain for every $k\in\zline^*$ that
$$\lVert\phi_{{\rm sym},k}\rVert_{X_{\rm sym}}^2=\left(\frac{\underline M}
{2\m l^2}+\frac{\rho\m g}{{\omega^2_{{\rm sym}, k}}\m l}\right)K^2f^2_{
\omega_{{\rm sym},k}}(L)+\frac{\rho}{h_0}K^2(L-l),$$
where $f_{{\rm sym},k}$ and $\underline M$ are introduced in \rfb{fg} and
\rfb{linearpara} respectively. Now, for every $k\in \zline^*$, we
define $\gamma_{{\rm sym}, k}$ by
\begin{equation}\label{gammark}
  (\gamma_{{\rm sym}, k})^{-2}=\left(\frac{\underline M}{2\m l^2}+
  \frac{\rho\m g}{{\omega^2_{{\rm sym}, k}}\m l}\right)K^2f^2_{\omega_{{
  \rm sym},k}}(L)+\frac{\rho}{h_0}K^2(L-l).
\end{equation}
We therefore obtain the normalized eigenvectors $(\widehat \phi_{{\rm sym},
k})_{k\in\zline^*}:=\left(\gamma_{{\rm sym}, k}\m\phi_{{\rm sym}, k}\right)
_{k\in\zline^*}$ that form an orthonormal basis in $X_{\rm sym}$.

\begin{rmk}\label{rmkomegark}{\rm
As we already realized, the symmetry property \rfb{zetaq} excludes the
case of the double eigenvalues discussed in Section \ref{controlpro}.
Based on the decomposition \rfb{decomx}, we notice that
$(\widehat \phi_{{\rm sym},k})_{k\in\zline^*}$ is a proper subset of
$(\widehat \phi_k)_{k\in\zline^*}$ introduced in Remark \ref{rem_norm}.
Moreover, we have
$$\omega_{{\rm sym},k}=\omega_{j(k)}\FORALL k\in\zline^*,$$
where $\omega_{j(k)}$ is the eigenvalue of $A$ and the subscript
$j(k)\in\zline^*$ can be easily found.
  }
\end{rmk}

\subsection{Proof of the main result}

The adjoint ${B}^*\in\Lscr(X_{\rm sym},\cline)$ of the control operator
$B$ defined in \rfb{nonAB} is
\begin{equation}\label{Br*}
{B}^*=\begin{bmatrix}\m 0 & 0 &0 & 0 & \frac{1}{2} \end{bmatrix}.
\end{equation}
We are in a position to prove
Theorem \ref{reachspace}.

\begin{proof}[Proof of Theorem \ref{reachspace}]
According to a classical result (see, for instance, \cite[Chapter 4]
{Obs_Book}), we know that for every $\tau>0$ and every $z\in X_{\rm sym}$,
\begin{equation*}\label{phi*}
(\Phi_{{\rm sym}, \tau}^*z)(t)=\left\{\begin{aligned}
&B^*{\tline_{{\rm sym}, \tau-t}^*}\m z\quad \text{for}\m\m \m
t\in[0,\tau],\\
&0\qquad \qquad\qquad\m\m\text{for}\m\m\m\m t>\tau,
\end{aligned}\right.
\end{equation*}
where ${B}^*$ is introduced in \rfb{Br*} and $\tline_{{\rm sym}}$
is the $C_0$-group generated by $A_{\rm sym}$. This implies that for
every $\tau>0$ we have
$$
\left\lVert\left(\Phi^*_{{\rm sym}, \tau}z\right)\right\rVert_{L^2
([0,\tau];U)}^2=\int_0^\tau \left\lVert B^*\tline^*_{{\rm sym}, \tau-t}\m
z\right\rVert_U^2\dd t.
$$
Notice that $0\in\rho(A_{\rm sym})$ and the imaginary part of the
eigenvalues $(\omega_{{\rm sym}, k})_{k\in \zline^*}$ is strictly
increasing, there exists $c>0$ such that $|\omega_{{\rm sym},
k}|\geq c$, which implies that $\gamma_{{\rm sym}, k}$ defined in
\rfb{gammark} is lower bounded by a positive constant. Combining
\rfb{Br*} and Proposition \ref{sprctrumAr}, we have
\begin{equation}\label{lowerbound}
\left|{B}^*(\widehat \phi_{{\rm sym}, k})\right|=\left|\frac{\gamma_{
 {\rm sym}, k}}{2l}\psi_{{\rm sym}, k}(l)\right|\geq C\left|\sin\left(
\frac{\omega_{{\rm sym}, k}}{\sqrt{gh_0}}(L-l)\right)\right|,
\end{equation}
for $k\in \zline^*$. Putting \rfb{omegarkasy} and \rfb{lowerbound}
together, we obtain that
\begin{equation}\label{lowomegark}
\big|{B}^*(\widehat \phi_{{\rm sym}, k})\big|\geq \frac{C}{k}
\FORALL k\in \zline^*.
\end{equation}
Since the operator $A_{\rm sym}$ is diagonalizable and skew-adjoint
on $X_{\rm sym}$, we have
$$\tline_{{\rm sym}, t}\m z=\sum_{k\in \zline^*}{\rm e}^{{\rm i}\m
\omega_{{\rm sym}, k}\m t}\left\langle z, \widehat\phi_{{\rm sym},k}
\right\rangle\widehat \phi_{{\rm sym},k} \FORALL z\in X_{\rm sym}, $$
where $(\widehat \phi_{{\rm sym}, k})_{k\in \zline^*}$, an
orthonormal basis of $X_{\rm sym}$, is introduced around \rfb{gammark}.
Hence, for every $\tau>0$ we have
$$
\int_0^\tau \left\lVert B^*\tline^*_{{\rm sym}, \tau-t}\m
z\right\rVert_U^2\dd t
=\int_0^\tau\bigg|\sum_{k\in \zline^*}{\rm e}^{-{\rm i}\m\omega_
{{\rm sym}, k}t}\langle z,\widehat \phi_{{\rm sym}, k}\rangle B^*
\widehat\phi_{{\rm sym}, k}\bigg|^2\dd t.
$$
Recalling \rfb{gapsym} and using the Ingham theorem (a generalization
of Parseval’s equality, see, for instance, in \cite[Chapter 8]{Obs_Book} or
\cite{KomLorbook}), there exists $\tau_0:=\frac{2(L-l)}{\sqrt{gh_0}}$
such that, for every $\tau>\tau_0$,
\begin{equation}\label{ingsym}
\int_0^\tau\left\lVert B^*\tline_{{\rm sym}, \tau-t}^*\m z\right\rVert
_U^2\m\dd t\geq C\sum_{k\in \zline^*}\big|\langle z,\widehat
\phi_{{\rm sym},k}\rangle\big|^2\big|B^*\widehat \phi_{{\rm sym}, k}
\big|^2.
\end{equation}
Therefore, \rfb{lowomegark} and \rfb{ingsym} imply that, for every
$\tau>\tau_0$,
$$\left\lVert\Phi^*_{{\rm sym}, \tau}z\right\rVert_{L^2([0,\tau];U)}^2
\geq c\left\lVert z\right\rVert_{\Dscr(A_{\rm sym})'}^2\FORALL z\in
X_{\rm sym}, $$
where $\Dscr(A_{\rm sym})'$ is the dual of $\Dscr(A_{\rm sym})$ with
respect to the pivot space $X_{\rm sym}$. Now we introduce the identity
function on $\Dscr(A_{\rm sym})$, denoted by ${\rm id}_{\Dscr(A_{\rm sym})}$,
then of course we have ${\rm id}_{\Dscr(A_{\rm sym})}\in\Lscr(\Dscr(A_{
\rm sym}),X_{\rm sym})$. Note that, for every $\tau>0$, $\Phi_{{\rm sym},
\tau}\in\Lscr(L^2([0, \tau];U);X_{\rm sym})$, we apply next a classical
consequence of the closed graph theorem (see, for instance,
\cite[Proposition 12.1.2]{Obs_Book}), which follows that
$$
\Ran \Phi_{{\rm sym}, \tau}\supset \Dscr(A_{\rm sym}).
$$
Combined with Remark \ref{symexplain}, we conclude that
$\Ran\Phi_\tau\supset \Dscr(A_{\rm sym})$ for every $\tau>\tau_0$.
Recalling that $A_{\rm sym}$ is densely defined, we immediately conclude
that \rfb{resmain} holds.
\end{proof}




\section{Conclusions, comments and open questions}

In this work, we investigate a coupled PDE-ODE system describing the motion
of a  floating body in a free boundary ideal fluid, within the linearized 
shallow water regime. The floating body is constrained to move vertically 
and it is actuated by a control force applied from the bottom of the object. 
Our main result asserts that, provided that, in a symmetric geometrical 
configuration, the system can be steered from rest to any smooth enough 
symmetric wave profile.

We give below, as a consequence of our main theorem, the following result
on the controllability and stabilizability properties of the system 
\rfb{lineareqr}--\rfb{dar}.

\begin{cor}\label{symresult}
Let $L'=L$ and the initial data $\zeta_0$ and $q_0$ satisfy the symmetry 
condition \rfb{zetaq}. Then the linear system defined by 
\rfb{lineareqr}--\rfb{dar} on $X_{\rm sym}$ (briefly designed by
$(A_{\rm sym},B)$), has the following properties
\begin{enumerate}
  \item $(A_{\rm sym},B)$ is not exactly controllable in time $\tau$
  for any finite $\tau>0$;
  \item $(A_{\rm sym},B)$ is approximately controllable on $X_{\rm sym}$ 
  in time $\tau$ for any $\tau>\frac{2(L-l)}{\sqrt{gh_0}}$;
  \item $(A_{\rm sym},B)$ is strongly stabilizable with the feedback
  operator $F=-{B}^*$.
More precisely, there exists $C>0$ such that the closed-loop semigroup
$\tline^{cl}_{\rm sym}$ generated by $A_{\rm sym}-B{B}^*$ satisfies
\begin{equation}\label{retasym}
  \|\tline_{{\rm sym}, t}^{cl} w_0\|_{X_{\rm sym}} \m\leq\m \frac{C}{(1+t)
  ^{\frac12}}\|w_0\|_{\Dscr(A_{\rm sym})} \FORALL w_0\in\Dscr(A_{\rm sym}),
  \ t\geq 0.
\end{equation}
\end{enumerate}
\end{cor}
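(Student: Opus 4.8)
The plan is to prove the three assertions by exploiting the spectral structure of $A_{\rm sym}$ established in Proposition \ref{sprctrumAr} together with the lower bound \rfb{lowomegark} on $B^*\widehat\phi_{{\rm sym},k}$ already obtained in the proof of Theorem \ref{reachspace}. Since $A_{\rm sym}$ is skew-adjoint, diagonalizable with simple eigenvalues $({\rm i}\m\omega_{{\rm sym},k})_{k\in\zline^*}$ and orthonormal eigenbasis $(\widehat\phi_{{\rm sym},k})_{k\in\zline^*}$, and $B\in\Lscr(\cline,X_{\rm sym})$ is a scalar (hence bounded) control operator, all three properties reduce to standard facts in the modal framework of \cite[Chapters 6, 8, 11]{Obs_Book}.

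For part (1), I would argue that exact controllability in time $\tau$ is equivalent to the exact observability estimate $\|\Phi^*_{{\rm sym},\tau}z\|^2_{L^2}\geq c\|z\|^2_{X_{\rm sym}}$, which by the Parseval-type expansion in the proof of Theorem \ref{reachspace} would require $|B^*\widehat\phi_{{\rm sym},k}|\geq c>0$ uniformly in $k$. But \rfb{lowomegark} is accompanied by the matching upper bound $|B^*\widehat\phi_{{\rm sym},k}|\to 0$ as $|k|\to\infty$ (the factor $1/k$ is sharp, since $\gamma_{{\rm sym},k}$ is bounded and $\psi_{{\rm sym},k}(l)$ stays bounded), so no uniform positive lower bound can hold; hence exact controllability fails for every finite $\tau$. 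Equivalently one may note that a bounded $B$ with $\Ran B$ finite-dimensional can never give exact controllability of a system whose generator has infinitely many eigenvalues, by the classical obstruction.

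For part (2), I would invoke Proposition \ref{prop_char_dual}(2): since the eigenvalues are simple and satisfy the uniform gap condition \rfb{gapsym} with $\gamma=\frac{\sqrt{gh_0}}{L-l}\pi$, approximate controllability in every time $\tau>\frac{2\pi}{\gamma}=\frac{2(L-l)}{\sqrt{gh_0}}$ is equivalent to $B^*\widehat\phi_{{\rm sym},k}\neq 0$ for all $k\in\zline^*$. The strict inequality \rfb{lowomegark} shows precisely that $B^*\widehat\phi_{{\rm sym},k}\neq 0$, so the criterion applies and yields approximate controllability on $X_{\rm sym}$ for all such $\tau$.

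Part (3) is the main obstacle, as it requires a quantified strong decay rate rather than a bare density or non-vanishing statement. My plan is to apply the feedback $F=-B^*$ and study the closed-loop generator $A_{\rm sym}-BB^*$. Since $A_{\rm sym}$ is skew-adjoint and $BB^*\geq 0$ is a bounded rank-one nonnegative perturbation, $A_{\rm sym}-BB^*$ generates a contraction semigroup, and the non-vanishing condition $B^*\widehat\phi_{{\rm sym},k}\neq 0$ from \rfb{lowomegark} guarantees strong stability via the Arendt--Batty / Lyubich--V\~u theorem. To upgrade strong stability to the polynomial rate $(1+t)^{-1/2}$ in \rfb{retasym}, I would use the frequency-domain / resolvent criterion for polynomial decay (Borichev--Tomilin), estimating the resolvent $(\mathrm{i}\beta-(A_{\rm sym}-BB^*))^{-1}$ on the imaginary axis. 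Expanding in the eigenbasis, the resolvent norm near $\mathrm{i}\omega_{{\rm sym},k}$ is controlled by $|B^*\widehat\phi_{{\rm sym},k}|^{-2}$, which by \rfb{lowomegark} grows like $k^2\sim\omega_{{\rm sym},k}^2\sim\beta^2$; combined with the uniform spectral gap \rfb{gapsym} ensuring the eigenvalues do not cluster, this gives a resolvent bound of order $|\beta|^2$, which by the Borichev--Tomilin characterization is exactly equivalent to the decay rate $t^{-1/2}$ on $\Dscr(A_{\rm sym})$. The delicate point is to confirm that the $1/k$ decay of $B^*\widehat\phi_{{\rm sym},k}$ is \emph{sharp} (both upper and lower bounds of order $1/k$), since the exponent $1/2$ in \rfb{retasym} is dictated by the square of this decay rate.
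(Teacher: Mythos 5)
Your parts (1) and (2) follow essentially the paper's own lines: for (1), your fallback ``classical obstruction'' (a bounded, finite-rank $B$ can never exactly control a generator with infinitely many purely imaginary eigenvalues) is precisely what the paper invokes via \cite[Theorems 4.1.5 and 5.2.6]{CZ_THE_BOOK}, and for (2), your appeal to Proposition \ref{prop_char_dual}(2) with the gap \rfb{gapsym} and the non-vanishing from \rfb{lowomegark} is the same duality/Ingham argument the paper runs through \rfb{ingsym}. The genuine divergence is part (3). The paper never looks at the resolvent of the closed-loop generator: it combines \rfb{lowomegark} with the Ingham inequality \rfb{ingsym} to obtain the weakened observability estimate $\int_0^\tau\lVert B^*\tline_{{\rm sym},t}w_0\rVert_U^2\,\dd t\geq C\lVert w_0\rVert^2_{\Dscr(A_{\rm sym})'}$, records the interpolation identity $\left[\Dscr(A_{\rm sym}),\Dscr(A_{\rm sym})'\right]_{1/2}=X_{\rm sym}$, and then quotes Theorem 2.4 of \cite{AmTu}, which converts exactly this open-loop information into the closed-loop decay \rfb{retasym}. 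Your route via the Borichev--Tomilin characterization (bounded semigroup, $\lVert(\mathrm{i}\beta-A_{\rm sym}+BB^*)^{-1}\rVert=O(\beta^2)$ if and only if decay $t^{-1/2}$ on the domain) is a legitimate alternative and gives a sharper frequency-by-frequency picture, but it carries two obligations the paper's route sidesteps: you must first verify $\mathrm{i}\rline\subset\rho(A_{\rm sym}-BB^*)$ (this does follow from the compact resolvent, dissipativity, and $B^*\widehat\phi_{{\rm sym},k}\neq0$), and, more substantively, you cannot literally ``expand the resolvent in the eigenbasis'' because $A_{\rm sym}-BB^*$ is \emph{not normal}; the bound near $\mathrm{i}\omega_{{\rm sym},k}$ requires the standard but nontrivial wave-packet argument (split $z$ into its component along $\widehat\phi_{{\rm sym},k}$ and the orthogonal rest, control the rest by the spectral gap \rfb{gapsym}, and control the resonant component through $|B^*z|^2=-\Re\langle(A_{\rm sym}-BB^*-\mathrm{i}\beta)z,z\rangle$). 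Once that is written out, your argument closes, so I would call this correct in outline but with a real proof obligation left open at its central step.

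Two smaller corrections. In (1), your parenthetical justification that $|B^*\widehat\phi_{{\rm sym},k}|\to0$ does not work as stated: boundedness of $\gamma_{{\rm sym},k}$ and of $\psi_{{\rm sym},k}(l)$ only gives boundedness of $B^*\widehat\phi_{{\rm sym},k}$. What you need is the vanishing $\psi_{{\rm sym},k}(l)=-K\sin\big(\tfrac{\omega_{{\rm sym},k}}{\sqrt{gh_0}}(L-l)\big)=O(1/k)$, which follows from the asymptotics \rfb{omegarkasy}, combined with the uniform bound $\gamma_{{\rm sym},k}\leq\big(\rho K^2(L-l)/h_0\big)^{-1/2}$ coming from \rfb{gammark}. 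In (3), sharpness of the $1/k$ rate is \emph{not} needed: \rfb{retasym} is a one-sided estimate, and only the lower bound \rfb{lowomegark} enters the resolvent bound $O(\beta^2)$ (equivalently, the paper's weak observability inequality); the matching upper bound would matter only if you wanted to prove the exponent $\tfrac12$ optimal, which the corollary does not claim.
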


\begin{proof}
$(1)$ Note that the operator $A_{\rm sym}$ is skew-adjoint and $B\in
\Lscr(\cline,X_{\rm sym})$, then the first assertion follows directly
from Curtain and Zwart \cite[Theorem 4.1.5]{CZ_THE_BOOK} or
\cite[Theorem 5.2.6]{CZ_THE_BOOK} in the same book, since $A_{\rm sym}$
has infinitely many unstable eigenvalues. Equivalently, we know that
the system $(A_{\rm sym},B)$ is not exponentially stabilizable (see,
for instance, Haraux \cite{haraux1989remarque} and Liu \cite{liu1997locally}).
Alternatively, we can apply the main result of Gibson \cite{Gibson}
or Guo, Guo and Zhang \cite[Theorem 3]{guo2007lack}.

$(2)$ The second assertion is a direct consequence of Theorem
\ref{reachspace}. By duality it suffices to show that there exists
$\tau_0>0$, such that for every $\tau>\tau_0$,
\begin{equation}\label{approxr}
  {B}^*\tline_{{\rm sym}, t}^*\m z=0\quad \text{on}\m\m\m
  [0,\tau]\m\m\m\Longrightarrow\m\m\m z=0.
\end{equation}
Let $B^*\tline_{{\rm sym}, t}^*\m z=0$ on $[0,\tau]$ with
$\tau>\frac{2(L-l)}{\sqrt{gh_0}}$, we obtain from \rfb{ingsym} that
$\big\langle z,\widehat \phi_{{\rm sym}, k}\big\rangle=0$ for every
$k\in \zline^*$, which implies that $z=0$. This, together with
Proposition \ref{prop_char_dual}, gives the result.

$(3)$ The approximate controllability of the system $(A_{\rm sym},B)$
is equivalent to the fact that the semigroup  $\tline_{\rm sym}^{cl}$
generated by $A_{\rm sym}-B{B}^*$ is strongly stable (for this, please
refer to Benchimol \cite{benchimol1978note}, Batty and Vu
\cite{batty1990stability}). To obtain the explicit decay rate, we further
conclude from \rfb{lowomegark} and \rfb{ingsym} that
\begin{equation*}\label{dualineq}
\int_0^\tau\left\lVert B^*\tline_{{\rm sym}, t}w_0\right\rVert_U^2\m\dd t
\geq C\m\lVert w_0\rVert_{\Dscr(A_{\rm sym})'}^2 \FORALL w_0\in
\Dscr(A_{\rm sym}).
\end{equation*}
 Hence, we have the interpolation
$$\left[\m\Dscr(A_{\rm sym}), \Dscr(A_{\rm sym})'\m\right]_\theta=
X_{\rm sym}\quad\text{with}\quad \theta=\frac{1}{2}. $$
We apply Theorem 2.4 in \cite{AmTu} and conclude that the semigroup
$\tline_{\rm sym}^{cl}$ generated by $A_{\rm sym}-B{B}^*$ satisfies
\rfb{retasym}.
\end{proof}

The main question left open in our work is the description of the 
reachable space of the considered system without symmetry conditions. 
Using the properties of the eigenvalues of the generator (see Subsection 
\ref{controlpro}) this could be accomplished provided that one has 
lower bounds on $|B^*\widehat \phi_k|$, where $B^*\in\Lscr(X,\cline)$ 
is defined in \rfb{Br*}, and $(\widehat \phi_k)_{k\in\zline^*}$ is the 
orthonormal basis introduced in Remark \ref{rem_norm}. Obtaining such 
lower bounds does not seem an easy task. Indeed, combining 
\rfb{varphik}--\rfb{abck} and \rfb{Br*} we obtain that for every 
$k\in\zline^*$,
\begin{equation}\label{B*est}
|B^*\widehat \phi_k|=\frac{1}{4\m l}\left|\gamma_k\left(K_2f_{
	\omega_k}(L')-K_1f_{\omega_k}(L)\right)\right|,
\end{equation}
where $\widehat \phi_k$ and $\gamma_k$ are introduced in Remark
\ref{rem_norm}, $f_{\omega_k}$ is defined in \rfb{fg}; with constant 
$K_1$ and $K_2$ which we are unable to express in a simple manner in terms 
of $\omega_k$. We also recall from Remark \ref{notsimple} that we are, 
in the general case, unable to confirm or to inform the existence of 
double eigenvalues.

Another open question of interest are the study of the system obtained 
by adding a viscosity term in the shallow water equations, in the spirit 
of Maity et al. \cite{maity2019analysis}. This could lead, in particular, 
to a description of the reachable space for nonlinear systems in which 
the fluid is modeled by the nonlinear shallow water equations. Finally, 
let us mention that an interesting question could be to consider the 
corresponding boundary control problems, in the spirit of 
\cite{Su2020stabilizability} (or a short version \cite{su2021strong}), 
\cite{Su2020asy} and \cite{Su2021capillary}.

\section*{Acknowledgements}
The authors would like to sincerely thank Prof. David Lannes (from
Universit\'e de Bordeaux) for detailed suggestion on this work.

\bibliography{Su_Tucsnak}
\bibliographystyle{siam}

\end{document}